\documentclass{article}
\usepackage{geometry}
\usepackage[utf8]{inputenc}
\usepackage{caption}
\newcommand{\nocaption}{\captionsetup{labelsep=none}\caption{}}

\providecommand{\keywords}[1]
{
  \small	
  \textbf{\textit{Keywords:}} #1
}

\usepackage{orcidlink}

\newcommand{\ZFC}{\mathrm{ZFC}}
\newcommand{\BZFC}{\mathrm{BZFC}}

\newcommand{\vt}{\ensuremath{\mathbf{t}}}
\newcommand{\vf}{\ensuremath{\mathbf{f}}}
\newcommand{\vb}{\ensuremath{\mathbf{b}}}
\newcommand{\vn}{\ensuremath{\mathbf{n}}}
\newcommand{\BS}{\mathrm{BS4}}
\newcommand{\bb}{\mathfrak{b}}
\newcommand{\nn}{\mathfrak{n}}
\newcommand{\rlm}{\mathrm{rlm}}
 \newcommand{\no}{{\sim}} 
 \newcommand{\abbr}{\;\;\; \text{abbreviates} \;\;\; }

\makeatletter
\DeclareOldFontCommand{\rm}{\normalfont\rmfamily}{\mathrm}
\DeclareOldFontCommand{\sf}{\normalfont\sffamily}{\mathsf}
\DeclareOldFontCommand{\tt}{\normalfont\ttfamily}{\mathtt}
\DeclareOldFontCommand{\bf}{\normalfont\bfseries}{\mathbf}
\DeclareOldFontCommand{\it}{\normalfont\itshape}{\mathit}
\DeclareOldFontCommand{\sl}{\normalfont\slshape}{\@nomath\sl}
\DeclareOldFontCommand{\sc}{\normalfont\scshape}{\@nomath\sc}
\makeatother

\usepackage{enumerate}

\usepackage{pgf,tikz,pgfplots}
\usepackage{tikz-cd}
\usepackage{tikz-3dplot}

\pgfplotsset{compat=1.15}
\usepackage{mathrsfs}
\usetikzlibrary{arrows}
\usetikzlibrary{arrows.meta}

\usepackage{amsmath}
\usepackage{amssymb}
\usepackage{yfonts}
\usepackage{amsthm}

\usepackage{multicol}

\usepackage{float}

\usepackage{mdframed}

\usepackage{hyperref}

\usepackage[toc,page]{appendix}

\newcommand{\llbracket}{[\![}
\newcommand{\rrbracket}{]\!]}

\newtheorem{theorem}{Theorem}[section]
\newtheorem{corollary}[theorem]{Corollary}

\newtheorem{proposition}[theorem]{Proposition}

\theoremstyle{definition}
\newtheorem*{remark}{Remark}

\theoremstyle{definition}
\newtheorem{definition}[theorem]{Definition}

\theoremstyle{definition}
\newtheorem{example}[theorem]{Example}

\usepackage{mathtools}

\makeatletter
\DeclareRobustCommand\widecheck[1]{{\mathpalette\@widecheck{#1}}}
\def\@widecheck#1#2{%
    \setbox\z@\hbox{\m@th$#1#2$}%
    \setbox\tw@\hbox{\m@th$#1%
       \widehat{%
          \vrule\@width\z@\@height\ht\z@
          \vrule\@height\z@\@width\wd\z@}$}%
    \dp\tw@-\ht\z@
    \@tempdima\ht\z@ \advance\@tempdima2\ht\tw@ \divide\@tempdima\thr@@
    \setbox\tw@\hbox{%
       \raise\@tempdima\hbox{\scalebox{1}[-1]{\lower\@tempdima\box
\tw@}}}%
    {\ooalign{\box\tw@ \cr \box\z@}}}
\makeatother

\usepackage{url}

\title{Cardinality in a paraconsistent and paracomplete set theory}
\author{Hrafn Valtýr Oddsson\thanks{Institut für Philosophie I, Ruhr-Universit\"at Bochum. Email: hrafn.oddsson@rub.de} \ \orcidlink{0000-0003-1594-340X}}
\date{}

\begin{document}
\maketitle

\begin{abstract}
This paper develops a rich theory of cardinality in the paraconsistent and paracomplete set theory $\BZFC$, where sets can be inconsistent ($A$ such that ``$x\in A$'' is both true and false for some $x$) or incomplete ($A$ such that ``$x\in A$'' is neither true nor false for some $x$). We carefully analyze what it means for two potentially incomplete or inconsistent sets to have ``the same size'', construct the corresponding cardinal numbers, and develop the basic theory of cardinal arithmetic. A surprising result is that the cardinality of any set can be expressed as a linear combination of three fundamental cardinal numbers with classical cardinals as coefficients. In that sense, our cardinal numbers form a three-dimensional space over the usual cardinals, much like how the complex numbers form a two-dimensional space over the reals.

\end{abstract}

\keywords{Non-classical set theory; paraconsistent and paracomplete set theory; cardinality; cardinal arithmetic}

\section{Introduction}
In \cite{KHOMSKII_ODDSSON_2024,HrafnThesis}, a formalization of paraconsistent and paracomplete set theory called $\BZFC$ was developed in the logic $\BS$ from \cite{BS4}. In the semantics of $\BS$, truth and falsity are separated, so a statement $\varphi$ can be true and not false ($\vt$), false and not true ($\vf$), both true and false ($\vb$), or neither true nor false ($\vn$). Accordingly, a set $A$ is called \emph{inconsistent} if the statement ``$x\in A$'' gets the truth value $\vb$ for some $x$. It is called \emph{incomplete} if the statement gets the truth value $\vn$ for some $x$. A set is called \emph{classical} if it is neither inconsistent nor incomplete.

\medskip 

The question this paper aims to answer is: \emph{How do we measure the size of sets that are inconsistent or incomplete?} Consider, for example, a set $A$ with a unique element $a$ such that ``$a \in A$'' is true, while for \emph{all} $x$, including $a$, ``$x \in A$'' is false.  Then $A$ is inconsistent, as ``$a \in A$'' is both true and false. How many elements does $A$ have? On the one hand, we could try to say that $A$ has \emph{zero} elements since everything is a non-member of $A$. However, this fails to capture that $A$ has an element, namely $a$. On the other hand, we could try to say that the number of elements in $A$ is \emph{one} since $a$ is the unique element of $A$. But this fails to capture the fact that for all $x$, ``$x\in A$'' is false. So, both one and zero fail to adequately capture the size of $A$. We are forced to admit that we will need a new number that lies somewhere between zero and one to describe the size of $A$.  

\medskip

To address this, the paper develops a theory of cardinality suitable for $\BZFC$ by focusing on two closely related notions: First, \emph{equinumerosity} (denoted $A\cong B$), which establishes when two sets should be considered to have the same size. Second, the \emph{cardinal number} $|A|$ of a set $A$, representing its size. These two concepts must be aligned so that $A\cong B$ gets the same truth value as $|A|=|B|$. This is because two sets having the same size should mean the same thing as their sizes being equal.

\medskip

Of course, this is far from the first time that the notion of cardinality has been investigated in a paraconsistent or paracomplete setting. As an example, see \cite{Weber2021} for a treatment in a paraconsistent set theory with a full comprehension axiom. 
However, to my knowledge, no previous attempt can adequately capture the situation described above. 

\medskip

The paper is structured as follows. In Section~\ref{preliminaries}, we provide the necessary preliminaries. We review the logic $\BS$ and the set theory $\BZFC$. The aim is to keep the paper sufficiently self-contained so that a new reader can follow along. However, we will be rather brief and leave out many technical details that can be found in \cite{KHOMSKII_ODDSSON_2024, HrafnThesis}. In Section~\ref{Sec:An Informal Treatment of Cardinality}, we give an informal account of cardinality in a $\BS$-style setting. The aim of this section is to give an intuitive picture that motivates the formal definitions in the later sections. In Section~\ref{EquinumerositySection}, we formalize our notion of equinumerosity between sets in this non-classical setting that allows us to compare the sizes of inconsistent or incomplete sets. We establish the basic properties of this notion and show how it relates to the classical notion. Finally, Section~\ref{CardNumSect} covers the cardinal numbers corresponding to our notion of equinumerosity and the basic theory of cardinal arithmetic. The main result here is that any cardinal $\kappa$ is uniquely expressible as a linear combination
$$\kappa=\kappa_\vt+\kappa_\vb\cdot \bb+\kappa_\vn \cdot\nn.$$
Here, $\kappa_\vt$, $\kappa_\vb$, and $\kappa_\vn$ are classical cardinals, and $\bb$ and $\nn$ are fundamental non-classical cardinal numbers, representing the sizes of simple inconsistent and incomplete sets, respectively. This result shows that our cardinal numbers naturally form a three-dimensional space over the classical cardinals, analogous to how complex numbers form a two-dimensional space over the reals.

\section{Preliminaries}\label{preliminaries} 
Before developing our theory of cardinality, we need to establish the logical and set-theoretical foundations. In this section, we review the four-valued logic $\BS$ and the set theory $\BZFC$ built on it. The aim is to keep the paper self-contained. However, for the sake of brevity, we will leave out some technical details, which can be found in \cite{KHOMSKII_ODDSSON_2024,HrafnThesis}.

\subsection{The logic}

We start by reviewing the logic $\BS$ underlying the set theory $\BZFC$. The propositional fragment appeared under the name $\mathsf{CLoNs}$ in \cite{ExtraBS4}, and our version is essentially due to Omori and Waragai \cite{BS4}. 

For the sake of brevity, we will present the logic semantically and in a classical meta-theory only. However, it should be noted that $\BS$ has a simple sound and complete proof system (see \cite[Section 2.3]{KHOMSKII_ODDSSON_2024} for details, where it is slightly modified from \cite{BS4Semantics}). Moreover, $\BS$ can be given natural Tarski semantics in a paraconsistent and paracomplete meta-theory \cite[Section 9]{KHOMSKII_ODDSSON_2024}.  

\medskip

$\BS$ works by separating truth from falsity. If $\varphi$ is a sentence, then $\varphi$ may be true or not true in a model and, independently, false or not false in that same model. This is achieved by considering two satisfaction relations, $\vDash^+$ and $\vDash^-$, representing truth and falsity in a model, respectively.

\begin{definition}

A \emph{model} $\mathcal{M}$ consists of the following:
\begin{itemize}
    \item a non-empty domain $M$
    \item a constant $c^\mathcal{M}\in M$ for each constant symbol $c$
    \item a function $f^\mathcal{M}:M^n\to M$ for each $n$-ary function symbol $f$ 
    \item a \emph{positive} interpretation $R^+_\mathcal{M}\subseteq M^n$ and a \emph{negative} interpretation $R^-_\mathcal{M}\subseteq M^n$ for each $n$-ary relation symbol $R$
    \item a binary relation $=^+$ which coincides with the true equality relation, and a binary relation $=^-$ satisfying  $a =^- b$ iff $b =^- a$.
\end{itemize}

We recursively define the two satisfaction relations $\vDash^+$ and $\vDash^-$ as follows:
\begin{itemize}
    \item $\mathcal{M}\vDash^+R(a_1,...,a_n)  \; \text{ iff } \; (a_1,...,a_n)\in R^+_\mathcal{M}$ \\
    $\mathcal{M}\vDash^-R(a_1,...,a_n) \; \text{ iff } \; (a_1,...,a_n)\in R^-_\mathcal{M}$
    \item $\mathcal{M}\vDash^+\no\varphi  \; \text{ iff } \; \mathcal{M}\vDash^-\varphi$\\
    $\mathcal{M}\vDash^-\no\varphi  \; \text{ iff } \; \mathcal{M}\vDash^+\varphi$
    \item $\mathcal{M}\vDash^+\varphi\land\psi  \; \text{ iff } \; \mathcal{M}\vDash^+\varphi\text{ and }\mathcal{M}\vDash^+\psi$\\
     $\mathcal{M}\vDash^-\varphi\land\psi  \; \text{ iff } \; \mathcal{M}\vDash^-\varphi\text{ or }\mathcal{M}\vDash^-\psi$ 
    \item $\mathcal{M}\vDash^+\varphi\lor\psi  \; \text{ iff } \; \mathcal{M}\vDash^+\varphi\text{ or }\mathcal{M}\vDash^+\psi$\\
     $\mathcal{M}\vDash^-\varphi\lor\psi  \; \text{ iff } \; \mathcal{M}\vDash^-\varphi\text{ and }\mathcal{M}\vDash^-\psi$ 
    \item $\mathcal{M}\vDash^+\varphi\rightarrow\psi  \; \text{ iff } \; \mathcal{M}\vDash^+\varphi\text{ implies }\mathcal{M}\vDash^+\psi$\\
     $\mathcal{M}\vDash^-\varphi\rightarrow\psi  \; \text{ iff } \; \mathcal{M}\vDash^+\varphi\text{ and }\mathcal{M}\vDash^-\psi$
     \item $\mathcal{M}\vDash^+\varphi\leftrightarrow\psi  \; \text{ iff } \; \mathcal{M}\vDash^+\varphi\text{ iff }\mathcal{M}\vDash^+\psi$\\
     $\mathcal{M}\vDash^-\varphi\leftrightarrow\psi  \; \text{ iff } \; (\mathcal{M}\vDash^+\varphi\text{ and }\mathcal{M}\vDash^-\psi)\text{ or }(\mathcal{M}\vDash^-\varphi\text{ and }\mathcal{M}\vDash^+\psi)$
    \item $\mathcal{M}\vDash^+\forall x\varphi(x)  \; \text{ iff } \; \mathcal{M}\vDash^+\varphi(a)$ for all $a\in M$\\
    $\mathcal{M}\vDash^-\forall x\varphi(x)  \; \text{ iff } \; \mathcal{M}\vDash^-\varphi(a)$ for some $a\in M$
    \item $\mathcal{M}\vDash^+\exists x\varphi(x)  \; \text{ iff } \; \mathcal{M}\vDash^+\varphi(a)$ for some $a\in M$\\
    $\mathcal{M}\vDash^-\exists x\varphi(x)  \; \text{ iff } \; \mathcal{M}\vDash^-\varphi(a)$ for all $a\in M$
    \item $\mathcal{M}\nvDash^+\bot$\\
    $\mathcal{M}\vDash^-\bot$
\end{itemize}

If $\Gamma\cup\{\varphi\}$ is a set of sentences, then we write
$\Gamma\vDash\varphi$ iff for every model $\mathcal{M}$, $\mathcal{M}\vDash^+\Gamma$ implies $\mathcal{M}\vDash^+\varphi.$
    
\end{definition}
\medskip

We define the \emph{truth value} $\llbracket \varphi \rrbracket^\mathcal{M}$ of $\varphi$ in $\mathcal{M}$ as follows:
\begin{align*}   
\llbracket \varphi \rrbracket^\mathcal{M} &:=\begin{cases}
    $\vt$ & \text{if \ $\mathcal{M}\vDash^+\varphi$ and $\mathcal{M}\nvDash^-\varphi$} \\
    $\vb$ & \text{if \ $\mathcal{M}\vDash^+\varphi$ and $\mathcal{M}\vDash^-\varphi$}\\
   $\vn$ & \text{if \ $\mathcal{M}\nvDash^+\varphi$ and $\mathcal{M}\nvDash^-\varphi$}\\
    $\vf$ & \text{if \ $\mathcal{M}\nvDash^+\varphi$ and $\mathcal{M}\vDash^-\varphi$}
  \end{cases}
\end{align*}

\medskip

This gives the following truth tables:
\smallskip

    \begin{center}
\begin{tabular}{c|c}
&$\no$\\
\hline
$\vt$&$\vf$\\
$\vb$&$\vb$\\
$\vn$&$\vn$\\
$\vf$&$\vt$\\
\end{tabular}
\quad
\begin{tabular}{c|cccc}
$\land$&$\vt$&$\vb$&$\vn$&$\vf$\\
\hline
$\vt$&$\vt$&$\vb$&$\vn$&$\vf$\\
$\vb$&$\vb$&$\vb$&$\vf$&$\vf$\\
$\vn$&$\vn$&$\vf$&$\vn$&$\vf$\\
$\vf$&$\vf$&$\vf$&$\vf$&$\vf$\\
\end{tabular}
\quad
\begin{tabular}{c|cccc}
$\lor$&$\vt$&$\vb$&$\vn$&$\vf$\\
\hline
$\vt$&$\vt$&$\vt$&$\vt$&$\vt$\\
$\vb$&$\vt$&$\vb$&$\vt$&$\vb$\\
$\vn$&$\vt$&$\vt$&$\vn$&$\vn$\\
$\vf$&$\vt$&$\vb$&$\vn$&$\vf$\\
\end{tabular}

\medskip

\begin{tabular}{c|cccc}
$\rightarrow$&$\vt$&$\vb$&$\vn$&$\vf$\\
\hline
$\vt$&$\vt$&$\vb$&$\vn$&$\vf$\\
$\vb$&$\vt$&$\vb$&$\vn$&$\vf$\\
$\vn$&$\vt$&$\vt$&$\vt$&$\vt$\\
$\vf$&$\vt$&$\vt$&$\vt$&$\vt$\\
\end{tabular}
\quad
\begin{tabular}{c|cccc}
$\leftrightarrow$&$\vt$&$\vb$&$\vn$&$\vf$\\
\hline
$\vt$&$\vt$&$\vb$&$\vn$&$\vf$\\
$\vb$&$\vb$&$\vb$&$\vn$&$\vf$\\
$\vn$&$\vn$&$\vn$&$\vt$&$\vt$\\
$\vf$&$\vf$&$\vf$&$\vt$&$\vt$\\
\end{tabular}
\end{center}

\medskip

We say that $\varphi$ is \emph{classical} in $\mathcal{M}$ if $\llbracket \varphi\rrbracket^\mathcal{M}\in\{\vt,\vf\}.$ Similarly, we say that a sentence is \emph{consistent} in $\mathcal{M}$ if $\llbracket \varphi\rrbracket^\mathcal{M}\neq\vb$ and \emph{complete} if $\llbracket \varphi\rrbracket^\mathcal{M}\neq\vn$.

\subsection{Defined connectives}
We will need a few defined connectives. First, we note that a bi-implication $\varphi\leftrightarrow\psi$ tells us that $\varphi$ is true iff $\psi$ is true. However, we must be careful, since it does not tell us that $\varphi$ is false iff $\psi$ is false. So $\varphi\leftrightarrow\psi$ can be true without $\varphi$ and $\psi$ being interchangeable! Similarly,  $\varphi\rightarrow\psi$ tells us that $\varphi$ being true implies that $\psi$ is true. It does not tell us that $\psi$ being false implies that $\varphi$ is false. In other words, $\rightarrow$ does not contrapose with respect to $\no$. We, therefore, add the following abbreviations:
\begin{itemize}
\item $\varphi\Leftrightarrow\psi \abbr (\varphi\leftrightarrow \psi)\land (\no\varphi\leftrightarrow\no\psi)$ 
\item $\varphi\Rightarrow\psi \abbr (\varphi\rightarrow \psi)\land (\no\psi\rightarrow\no\varphi)$ 
\end{itemize}
These connectives are called \emph{strong equivalence} and \emph{strong implication}, respectively. 

\smallskip

Next, we note that $\no\varphi$ being true only allows us to conclude that $\varphi$ is false. It does not allow us to exclude the possibility that $\varphi$ is also true. To that end, we introduce the \emph{classical negation} $\neg$:
\begin{itemize} \item $\neg \varphi \abbr \varphi\rightarrow \bot$ 
\end{itemize}
The classical negation expresses the absence of truth and allows us to make the following abbreviations:

 \begin{itemize} \item $! \varphi \abbr \no \lnot \varphi $
\item $? \varphi \abbr \lnot \no  \varphi $
\item $\circ \varphi \abbr !\varphi\leftrightarrow ?\varphi$
\end{itemize}
Their truth tables are the following:
\begin{center} 
\begin{tabular}{c|cccc}
$\varphi$&  $\lnot \varphi$&$ !\varphi $& $?\varphi$&$\circ\varphi$\\
\hline
$\vt$ & $\vf$ & $\vt$ & $\vt$ & $\vt$ \\
$\vb$  & $\vf$ & $\vt$ & $\vf$ & $\vf$ \\
$\vn$ & $\vt$ & $\vf$ & $\vt$ & $\vf$\\
$\vf$ & $\vt$ & $\vf$ & $\vf$ & $\vt$
\end{tabular}
\end{center}

Notice that $\circ\varphi$ is true iff the truth value of $\varphi$ is $\vt$ or $\vf$, that is, $\circ\varphi$ is true iff $\varphi$ has a classical truth value. So, we will read $\circ\varphi$ as saying that $\varphi$ is \emph{classical}. We see that $\neg\varphi$, ${!}\varphi$, ${?}\varphi$, and $\circ\varphi$ are all classical. Notice that ${!}\varphi$ is true iff $\varphi$ is true, while ${?}\varphi$ is false iff $\varphi$ is false. So, ${!}\varphi$ and ${?}\varphi$ are classical formulas that together completely capture the truth value of $\varphi$.

\smallskip 

Finally, we define the connective $\&$ by 
\begin{itemize}
    \item $\varphi \,\&\, \psi \abbr \no(\varphi\rightarrow \no \psi)$
\end{itemize}

\smallskip

\noindent The following two formulas are valid:
$$\varphi\,\&\,\psi\leftrightarrow \varphi\land\psi \quad \text{ and }\quad\no( \varphi\,\&\,\psi)\leftrightarrow (\varphi\rightarrow\no\psi).$$



\smallskip

It follows that $\varphi\,\&\,\psi$ is true in exactly the same circumstances as $\varphi\land\psi$ is, but it is false when $\varphi$ being true implies that $\psi$ is false.
Also, note that $\varphi\,\&\,\psi$ is classically equivalent to $\land$, so it has just as much of a claim to the title of conjunction. There are many places where, in a classical setting, we would use a conjunction that turns out to be better served by $\&$ than $\land$. As an example, we have
$$\vDash\varphi(f(a))\Leftrightarrow\exists b\left(f(a)=b\,\&\,\varphi(b)\right),$$
but
$$\not\vDash\varphi(f(a))\Leftrightarrow\exists b\left(f(a)=b\land\varphi(b)\right).$$
\subsection{$\BZFC$}
This section reviews the set theory from \cite{KHOMSKII_ODDSSON_2024, HrafnThesis}. The focus is on the aspects most relevant to our development of cardinality while providing enough background for readers unfamiliar with the original material. Not every axiom of $\BZFC$ is covered, and many things are stated without proof. We introduce slight changes in our treatment, most notably in the definition of restricted quantifiers. Additionally, we introduce new notation for characterizing  sets by their
inconsistent, classical, and incomplete parts.

\subsubsection{Extensionality and comprehension}
To start with, we will need to specify when two sets are equal and, equally importantly, when they are unequal. We want two sets $A$ and $B$ to be equal iff the statements ``$x\in A$'' and ``$x\in B$'' get the same truth value for all $x$. We want them to be unequal iff one has an element that the other does not. 
$$\textbf{Extensionality: }\;\;\;\;\; \boldsymbol{ \forall A \forall B ( A=B  \; \Leftrightarrow \;  \forall x(x\in A\Leftrightarrow x\in B))} $$
Now, $A$ and $B$ are equal iff they agree on both their positive extensions (the elements they contain) and their negative extensions (the elements they exclude). We also get
$$A\neq B \leftrightarrow \exists x(x\in A\land x\notin B)\lor \exists x(x\notin A\land x\in B).$$

\noindent Here, and in the remainder of the paper, we use the following abbreviations:
 $$A\neq B \abbr \no(A=B) \quad \text{ and }\quad A\notin B \abbr \no(A\in B).$$

\medskip

We will use the informal notion of a \emph{class} and write $\{x:\varphi(x)\}$ to refer to the class of all objects $y$ that satisfy the property $\varphi$. This motivates the following abbreviation:
$$A=\{x:\varphi(x)\} \abbr \forall x(x\in A\Leftrightarrow \varphi(x)).$$

\medskip 

Two classes of particular importance are 
$$V:=\{x:\top\}\quad \text{ and } \quad \emptyset:=\{x:\bot\}.$$
They are called the \emph{universe} and the \emph{empty set}, respectively. The elements of $V$ are what we call \emph{sets}, and a class is said to be a \emph{proper class} if it is not a set. We have $\forall x(x\in V)$, $\forall x(x\notin \emptyset)$, $\neg \exists x(x\notin V)$, and $\neg \exists x(x\in \emptyset)$. 

It is clear that we cannot add a general comprehension schema, since $\{x:\neg(x\in x)\}$ is provably a proper class. Instead, $\BZFC$ includes the weaker schema:
$$\textbf{Comprehension: }\;\;\;\;\; \boldsymbol{ \forall A \exists B \forall x( x\in B  \; \Leftrightarrow \;  x\in A\land \varphi(x))}$$

\medskip

\noindent This tells us that any subclass of a set is itself a set. So, an appeal to the concept of limitation of size can justify this axiom. Accordingly, we adopt the following abbreviation:
$$\{x\in A:\varphi(x)\}\abbr \{x:x\in A\land \varphi(x)\}.$$

\medskip

We now see that the empty set $\emptyset$ is, in fact, a set, while the universe $V$ is a proper class. 

\subsubsection{Classical and hereditarily classical sets}

We can express that a formula $\varphi$ has a classical truth value using the connective $\circ\varphi$. Extending this idea to sets, we call a set $A$ \emph{classical} when the membership relation ``$ x\in A$'' has a classical truth value for all $x$:

$$\forall x \big({\circ}(x\in A)\big).$$

\smallskip

The nice thing about classical sets is that they behave in a familiar way. For any classical set, an object either belongs to it or does not, never both. However, one should still be careful since even classical sets can have non-classical elements.

We define $\{a_1,...,a_n\}$ as the classical set containing exactly the elements in question. Formally,
$$\{a_1,...,a_n\}:=\{x:{!}(x=a_1)\lor\dots\lor{!}(x=a_n)\}.$$

\smallskip

So even if $a$ happens to be a non-classical set, the \emph{singleton} $\{a\}$ will still be a classical set with exactly one element, namely $a$. 

\medskip

We say that a set is \emph{hereditarily classical} if it is classical, all its elements are classical, the elements of its elements are classical, and so all the way down.\footnote{See \cite[Definition 8.1]{KHOMSKII_ODDSSON_2024} for a proper recursive definition.} We denote the class of all hereditarily classical sets by $\mathbb{HCL}$.

The class $\mathbb{HCL}$ forms an inner model of classical $\ZFC$. This gives us access to the familiar constructions of $\ZFC$ within $\mathbb{HCL}$, including the standard von Neumann ordinals. Consequently, for any classical set $A$ (even one that is not hereditarily classical), we can assign to it the least von Neumann ordinal that stands in one-to-one correspondence with $A$. We denote this ordinal by $|A|_{Cl}$.

\medskip

Beyond classicality, we also have the weaker notions of \emph{consistency} and \emph{completeness}:

\begin{itemize}
\item $A$ is \emph{consistent} if $\forall x\big({!}(x\in A)\rightarrow {?}(x\in A)\big)$ 

\item $A$ is \emph{complete} if $\forall x\big({?}(x\in A)\rightarrow {!}(x\in A)\big)$
\end{itemize}

A set that fails to be consistent is called \emph{inconsistent}, while one that fails to be complete is called \emph{incomplete}. Thus,
\begin{align*}
A \text{ is inconsistent} \quad &\text{ iff }\quad \exists x(x\in A\land x\notin A); \\
A \text{ is incomplete} \quad &\text{ iff }\quad \exists x \neg(x\in A\lor x\notin A).
\end{align*}

\subsubsection{Basic set operations}

We begin with the fundamental notion of one set being contained within another:
$$
A\subseteq B \abbr \forall x(x\in A\Rightarrow x\in B).$$
This gives:
\begin{itemize}
\item $A\not\subseteq B \leftrightarrow \exists x(x\in A\land x\notin B)$ 
      
\item $A=B \Leftrightarrow A\subseteq B \land B\subseteq A$     
\end{itemize}

\medskip
The standard operations of \emph{union}, \emph{intersection}, and \emph{difference} are defined as follows:
\begin{align*}
A\cup B &:=\{x:x\in A\lor x\in B\};  \\
A\cap B &:= \{x:x\in A\land x\in B\};\\
A\setminus B &:= \{x:x\in A\land x\notin B\}.
\end{align*}

\medskip
We make a slight change to the definition of \emph{restricted quantifiers} from \cite{KHOMSKII_ODDSSON_2024,HrafnThesis} to the following:
\begin{align*}
\exists x \in A\,\varphi &\abbr \exists x (x \in A \,\&\,\varphi); \\
\forall x \in A\,\varphi &\abbr \forall x(x\in A\rightarrow \varphi).
\end{align*}
These definitions give the following equivalences:
\begin{itemize}
    \item $\forall x \in A\,\varphi \; \leftrightarrow \; \forall x(x\in A\rightarrow \varphi)$
    \item $\exists x \in A\,\varphi \; \leftrightarrow \; \exists x (x\in A\land \varphi)$
    \item $\no \exists x \in A\,\varphi \; \Leftrightarrow \; \forall x \in A\,\no\varphi$
    \item $\no \forall x \in A\,\varphi \; \Leftrightarrow \; \exists x \in A\,\no\varphi$
\end{itemize}

Notice that our definition of $\exists x \in A\,\varphi$ uses the connective $\&$ instead of $\land$. This is important because if we used the usual $\exists x (x\in A\land \varphi)$, then $\no\exists x\in A \,\varphi$ would be equivalent to $\forall x(x\notin A \lor \no \varphi)$ rather than $\forall x\in A \,\no \varphi$. Moreover, when evaluating the truth value of $\exists x (x \in A \,\&\,\varphi)$, we only need to consider elements of $A$. To see if $\exists x (x \in A \,\&\,\varphi)$ is true, we only need to search for witnesses in $A$, and to see if it is false, we only need to check that $\varphi(x)$ is false for all $x$ from $A$.

\subsubsection{Components of sets}

It can be useful to think of a given set $A$ in terms of its \emph{positive extension}, which consists of those objects making the statement ``$x\in A$'' true, and \emph{negative extension}, which consists of those objects making the statement ``$x\in A$'' false.  These can be formalized as $\{x: {!}(x\in A)\}$ and $\{x:{!}(x\notin A)\}$, respectively. These are both classical, but the problem is that the negative extension turns out to be a proper class. For a dramatic example of this, notice that the negative extension of $\emptyset$ is $V$. 

Therefore, it is more useful to think in terms of the positive extension together with the complement of the negative extension, which is given by $\{x:{?}(x\in A)\}$. For the sake of conceptual symmetry, these are renamed to \emph{!-extension} and \emph{?-extension} of $A$, given by $$A^!:=\{x:{!}(x\in A)\}\quad \text{and}\quad A^?:=\{x:{?}(x\in A)\}, $$
respectively.

Both $A^!$ and $A^?$ can be shown to be sets under the axioms of $\BZFC$. What is important to us is that 
$$x\in A \leftrightarrow x\in A^!\quad \text{ and }\quad x\notin A \leftrightarrow x\notin A^?.$$
We can therefore describe $A$ in terms of the two classical sets $A^!$ and $A^?$.
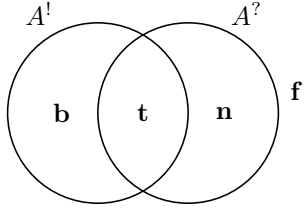
\begin{figure}[H]
\centering
\begin{tikzpicture}[line cap=round,line join=round,>=triangle 45,scale=0.6]
\clip (-4,-2.1) rectangle (4,2.6);
\draw [line width=0.6pt] (-1,0) circle (2);
\draw [line width=0.6pt] (1,0) circle (2);
\node at (-2.3,2.2) {$A^!$};
\node at (2.3,2.2) {$A^?$};
\node at (-1.8,0) {$\vb$};
\node at (0,0) {$\vt$};
\node at (1.8,0) {$\vn$};
\node at (3.4,0.5) {$\vf$};
\end{tikzpicture}
\caption{The four truth values of ``$x\in A$'' with respect to $A^!$ and $A^?$.}
\label{Balls!?}
\end{figure}

We have the following equivalences (\cite[Lemma 4.2]{KHOMSKII_ODDSSON_2024}):

 \begin{itemize}

\item $A \subseteq B \; \leftrightarrow \; A^! \subseteq B^! \land A^? \subseteq B^? $

\item $A \not\subseteq B \; \leftrightarrow \; \exists z (z \in A \land z\notin B) \; \leftrightarrow \;  A^! \not\subseteq B^?$

\item $?(A \subseteq B) \;\leftrightarrow \;   A^! \subseteq B^?$ 

\item $A = B \; \leftrightarrow \;  A^! =  B^! \land A^? = B^?$

\item $A \neq B \; \leftrightarrow \;   \exists z ((z \in A \land z\notin B) \lor (z \in B \land z\notin A)) \; \leftrightarrow \; A^! \not\subseteq B^? \lor B^! \not\subseteq A^?$
 
\item $?(A = B) \; \leftrightarrow \;  A^! \subseteq  B^? \land B^! \subseteq A^?$

\end{itemize}

\smallskip

We can define the \emph{inconsistent}, \emph{classical}, and \emph{incomplete} parts of $A$ by letting
$$A_\vb:=A^!\setminus A^?,\quad A_\vt:=A^!\cap A^?, \quad\text{and}\quad A_\vn:=A^?\setminus A^!,$$
respectively. 
\begin{figure}[H]
\centering
\begin{tikzpicture}[line cap=round,line join=round,>=triangle 45,scale=0.6]
\clip (-4,-2.1) rectangle (4,2.1);
\draw [line width=0.6pt] (-1,0) circle (2);
\draw [line width=0.6pt] (1,0) circle (2);
\node at (-1.6,1) {$A_\vb$};
\node at (0,1) {$A_\vt$};
\node at (1.6,1) {$A_\vn$};
\node at (-1.8,-0.2) {$\vb$};
\node at (0,-0.2) {$\vt$};
\node at (1.8,-0.2) {$\vn$};
\node at (3.4,0.3) {$\vf$};
\end{tikzpicture}
\caption{The four truth values of ``$x\in A$'' with respect to $A_\vt$, $A_\vb$, and $A_\vn$.}
\label{Ballstbf}
\end{figure}

Taking them all together, we get the \emph{realm} of $A$, given by
$$\rlm(A):=A^!\cup A^?\quad (=A_\vb\cup A_\vt\cup A_\vn).$$

\smallskip

\noindent This is the smallest classical set containing $A$, and the structure of $A$ is wholly determined by how the membership relation behaves on $\rlm(A)$. 

We now have \begin{align*}   
\llbracket x\in A \rrbracket^\mathcal{M} &=\begin{cases}
    $\vt$ & \text{if \ $x\in A_\vt$}\\
    $\vb$ & \text{if \ $x\in A_\vb$}\\
    $\vn$ & \text{if \ $x\in A_\vn$}\\
    $\vf$ & \text{if \ $x\notin \rlm (A)$}.
  \end{cases}
\end{align*}

\subsubsection{Non-classical sets}

$\BZFC$ includes a crucial axiom that ensures our theory contains genuinely non-classical sets:

$$\textbf{ACLA:} \quad \exists A (A^! \not\subseteq A^?) \; \land \; \exists A(A^? \not\subseteq A^!)$$

This Anti-Classicality Axiom guarantees the existence of both inconsistent sets (where $A^! \not\subseteq A^?$) and incomplete sets (where $A^? \not\subseteq A^!$). A consequence of ACLA is that for any two classical sets $X$ and $Y$, there exists a unique set $A$ such that $A^!=X$ and $A^?=Y$ (see \cite[Theorem 5.2]{KHOMSKII_ODDSSON_2024}). 

\begin{definition}
    Given a pair of classical sets $X$ and $Y$, we denote the unique set $A$ such that $A^!=X$ and $A^?=Y$ by $\langle X|Y\rangle$.
\end{definition}

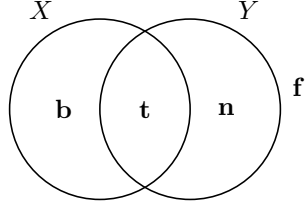
\begin{figure}[H]
\centering
\begin{tikzpicture}[line cap=round,line join=round,>=triangle 45,scale=0.6]
\clip (-4,-2.1) rectangle (4,2.6);
\draw [line width=0.6pt] (-1,0) circle (2);
\draw [line width=0.6pt] (1,0) circle (2);
\node at (-2.3,2.2) {$X$};
\node at (2.3,2.2) {$Y$};
\node at (-1.8,0) {$\vb$};
\node at (0,0) {$\vt$};
\node at (1.8,0) {$\vn$};
\node at (3.4,0.5) {$\vf$};
\end{tikzpicture}
\caption{The four truth values of ``$x\in \langle X|Y\rangle$'' with respect to $X$ and $Y$.}
\label{XY}
\end{figure}



Another consequence of the $\mathrm{ACLA}$ axiom is that given three disjoint classical sets $X$, $Y$, and $Z$, there is a unique set $A$ such that $A_\vb=X$, $A_\vt=Y$, and $A_\vn=Z$. 

\begin{definition}\label{def:XYZ}
For pairwise disjoint classical sets $X$, $Y$, and $Z$, we write $\langle X|Y|Z\rangle$ to denote the unique set $A$ with $A_\vb = X$, $A_\vt = Y$, and $A_\vn = Z$.  
\end{definition}

\begin{figure}[H]
\centering
\begin{tikzpicture}[line cap=round,line join=round,>=triangle 45,scale=0.6]
\clip (-4,-2.1) rectangle (4,2.1);
\draw [line width=0.6pt] (-1,0) circle (2);
\draw [line width=0.6pt] (1,0) circle (2);
\node at (-1.6,1) {$X$};
\node at (0,1) {$Y$};
\node at (1.6,1) {$Z$};
\node at (-1.8,-0.2) {$\vb$};
\node at (0,-0.2) {$\vt$};
\node at (1.8,-0.2) {$\vn$};
\node at (3.4,0.3) {$\vf$};
\end{tikzpicture}
\caption{The four truth values of ``$x\in \langle X|Y|Z\rangle$'' with respect to $X$, $Y$, and $Z$.}
\label{XYZ}
\end{figure}
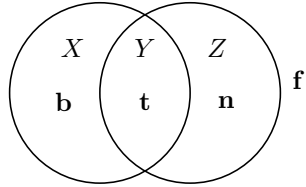

The two bracket notations are connected by the identities:
\begin{align*}
\langle X|Y\rangle &= \langle X\setminus Y|X\cap Y|Y\setminus X\rangle; \\
\langle X|Y|Z\rangle &= \langle X\cup Y|Y\cup Z\rangle.
\end{align*}
In the second equality, $X$, $Y$, and $Z$ are pairwise disjoint. Finally,
$$A=\langle A^!|A^?\rangle=\langle A_\vb| A_\vt|A_\vn\rangle.$$

Both types of notation have their own advantages, but for our purposes, we will tend to rely on the three-part notation $\langle X|Y|Z\rangle$. This is because it expresses the structure of a set in a more direct way: When using the two-part notation, the truth value of ``$a\in \langle \{a,b,c\}|\{c,d\}\rangle$'' depends on whether $a\in \{c,d\}$. On the other hand, we can directly see that the truth value of ``$a\in \langle \{a,b\}|\{c\}|\{d\}\rangle$'' is $\vb$. 
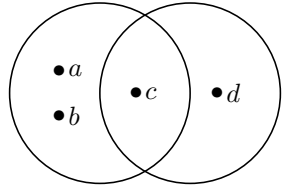
\begin{figure}[H]
\centering
\begin{tikzpicture}[line cap=round,line join=round,>=triangle 45,scale=0.6]
\clip (-3.8,-2.2) rectangle (3.8,2.2);
\draw [line width=0.6pt] (-1,0) circle (2);
\draw [line width=0.6pt] ( 1,0) circle (2);
\node at (-1.9, 0.5) {$\bullet$}; \node at (-1.9, 0.5) [anchor=west] {$a$};
\node at (-1.9,-0.5) {$\bullet$}; \node at (-1.9,-0.5) [anchor=west] {$b$};
\node at (-0.2, 0)   {$\bullet$}; \node at (-0.2, 0)   [anchor=west] {$c$};
\node at ( 1.6, 0)   {$\bullet$}; \node at ( 1.6, 0)   [anchor=west] {$d$};
\end{tikzpicture}
\caption{The set $\langle \{a,b\}|\{c\}|\{d\}\rangle$.}
\label{fig:bracket-notation}
\end{figure}

\subsubsection{Functions}
We will assume a suitable encoding\footnote{The specific encoding can be found in \cite[p. 981]{KHOMSKII_ODDSSON_2024}.} of the \emph{ordered pair} $(a,b)$ that satisfies
$$(a,b)=(c,d)\Leftrightarrow (a=c \land b=d).$$
By a \emph{classical function}, we mean a classical set $f$ of ordered pairs such that
$$\left((a,b)\in f\land (a,c)\in f\right)\rightarrow {!}(b=c).$$
Thus, a classical function is just a normal function we are familiar with from classical set theory that takes an input $a$ and spits out an output $f(a)$. For our purposes, classical functions will suffice.

\begin{remark}
    The reason we include the $!$-sign in the definition is to make sure that the statement ``$f$ is a classical function'' is itself classical. The results of this paper do not hinge on this fact, and if the reader prefers, they are welcome to follow along with a $!$-free definition. 
\end{remark}
The \emph{domain} of $f$ is given by $$\mathrm{dom}(f):=\{x:\exists y((x,y)\in f)\},$$ which is a classical set since $f$ is classical. If $A$ is any set such that $A\subseteq \mathrm{dom}(f)$, then the \emph{image} of $A$ under $f$ is given by
$$f[A]:=\{f(x):x\in A\}:=\{y:\exists x(y=f(x)\,\&\,x\in A)\}.$$
Notice that $b\in f[A]$ is true when there is an $a\in A$ with $f(a)=b$. It is false when $a\notin A$ for every $a$ with $f(a)=b$. This tells us that 
$$f[A^!]=(f[A])^!\quad \text{ and }\quad f[A^?]=(f[A])^?.$$
A good way to think of this is that $\llbracket b\in f[A]\rrbracket$ is the join of $\llbracket a\in A\rrbracket$ taken over all $a$ that get mapped to $b$.
\begin{example}
    Take $A=\langle\{1,2,3\}|\{3,4,5\}\rangle$ and let $f$ be the function from $\rlm(A)$ given by \begin{center} 
\begin{tabular}{c|ccccc}
$x$ &  $1$ & $2$ & $3$ & $4$ & $5$ \\
\hline
$f(x)$ & $a$ & $b$ & $c$ & $d$ & $b$ \\
\end{tabular}
\end{center}
Then $f[A]=\langle \{a,b,c\}|\{b,c,d\}\rangle.$
\begin{figure}[H]
    \centering
\begin{tikzpicture}[scale=0.8, 
  >={Stealth[length=5pt]}, thick,
  shorten >=2pt, shorten <=2pt,
  every node/.style={inner sep=1pt, font=\small}
]
\draw (0, 1) ellipse (1.25 and 1.7);
\draw (0,-1) ellipse (1.25 and 1.7);
\node at (-1.65, 2) {$A^!$};
\node at (-1.65,-2) {$A^?$};
\node[label=left:$1$] (a1) at (0, 2) {$\bullet$};
\node[label=left:$2$] (a2) at (0, 1.1) {$\bullet$};
\node[label=left:$3$] (a3) at (0, 0.0) {$\bullet$};
\node[label=left:$4$] (a5) at (0,-1.1) {$\bullet$};
\node[label=left:$5$] (a4) at (0,-2) {$\bullet$};
 
\begin{scope}[xshift=6cm]
\draw (0, 1) ellipse (1.25 and 1.7);
\draw (0,-1) ellipse (1.25 and 1.7);
\node at (1.9, 2) {$(f[A])^!$};
\node at (1.9,-2) {$(f[A])^?$};
\node[label=right:$a$] (fa) at (0, 1.5) {$\bullet$};
\node[label=right:$b$] (fb) at (0, 0.3) {$\bullet$};
\node[label=right:$c$] (fc) at (0,-0.3) {$\bullet$};
\node[label=right:$d$] (fd) at (0,-1.4) {$\bullet$};
\end{scope}
 
\draw[->] (a1) -- (fa);
\draw[->] (a2) -- (fb);
\draw[->] (a3) -- (fc);
\draw[->] (a4) -- (fb);
\draw[->] (a5) -- (fd);
\end{tikzpicture}
    \nocaption
    \label{fig:Fun}
\end{figure}

\end{example}

We say that $f$ is an \emph{injection} if 
$$\left((a,c)\in f\land (b,c)\in f\right)\rightarrow {!}(a=b).$$
In which case, we get
$$f[A_\vb]=(f[A])_\vb, \quad f[A_\vt]=(f[A])_\vt, \quad \text{and}\quad f[A_\vn]=(f[A])_\vn.$$

\begin{example}
    Take $A=\langle\{1,2\}|\{3\}|\{4\}\rangle$ and let $f$ be the following injection from $\rlm(A)$:
\begin{center} 
\begin{tabular}{c|ccccc}
$x$ &  $1$ & $2$ & $3$ & $4$ \\
\hline
$f(x)$ & $a$ & $b$ & $c$ & $d$ \\
\end{tabular}
\end{center}
Then $f[A]=\langle \{a,b\}|\{c\}|\{d\}\rangle.$
The important thing to note is that $f[A]$ has precisely the same structure as $A$.

\begin{figure}[H]
    \centering
\begin{tikzpicture}[scale=0.7,
  >={Stealth[length=5pt]}, thick,
  shorten >=2pt, shorten <=2pt,
  every node/.style={inner sep=1pt, font=\small}
]
\draw (0, 1) ellipse (1.25 and 1.7);
\draw (0,-1) ellipse (1.25 and 1.7);
\node at (-1.65, 2) {$A^!$};
\node at (-1.65,-2) {$A^?$};
\node[label=left:$1$] (a1) at (0, 2) {$\bullet$};
\node[label=left:$2$] (a2) at (0, 1.2) {$\bullet$};
\node[label=left:$3$] (a3) at (0, 0.0) {$\bullet$};
\node[label=left:$4$] (a4) at (0,-1.5) {$\bullet$};
 
\begin{scope}[xshift=6cm]
\draw (0, 1) ellipse (1.25 and 1.7);
\draw (0,-1) ellipse (1.25 and 1.7);
\node at (1.9, 2) {$(f[A])^!$};
\node at (1.9,-2) {$(f[A])^?$};
\node[label=right:$a$] (fa) at (0, 2) {$\bullet$};
\node[label=right:$b$] (fb) at (0, 1.2) {$\bullet$};
\node[label=right:$c$] (fc) at (0, 0.0) {$\bullet$};
\node[label=right:$d$] (fd) at (0,-1.5) {$\bullet$};
\end{scope}
 
\draw[->] (a1) -- (fa);
\draw[->] (a2) -- (fb);
\draw[->] (a3) -- (fc);
\draw[->] (a4) -- (fd);
\end{tikzpicture}
    \nocaption
    \label{fig:Injection}
\end{figure}

\end{example}

\smallskip

Lastly, we will be using the following form of the axiom of choice:
$$\textbf{Choice: \; Any classical set of non-empty sets has a choice function.}$$

\noindent Here, a choice function is simply a function such that $f(x)\in x$ for each $x\in \mathrm{dom}(f)$.

\subsubsection{Products and disjoint unions}

The \emph{Cartesian product} of sets $A$ and $B$ is given by
$$A\times B:=\{(x,y):x\in A\land y\in B\}.$$
This gives us the expected membership condition for the Cartesian product:
$$(a,b)\in A\times B \Leftrightarrow a\in A \land b\in B.$$
Similarly, the \emph{disjoint union} of $A$ and $B$ is given by
$$A \uplus B:=\{(x,0):x\in A\}\cup \{(x,1):x\in B\}.$$
We have
$$(a,0)\in A\uplus B \Leftrightarrow a\in A;$$
$$(b,1)\in A\uplus B \Leftrightarrow b\in B.$$

\section{An informal treatment of cardinality}\label{Sec:An Informal Treatment of Cardinality}
When dealing with familiar mathematical concepts in a non-classical setting, it can be tempting to simply copy old definitions and continue from there. However, we must be careful, as our intuitions about these concepts are typically formed against a classical background, and the familiar definitions are motivated for a classical theory. We must therefore dedicate some space to developing our intuition for these concepts in our new setting. In this section, we develop an intuitive picture of cardinality in our paraconsistent and paracomplete setting. This will then serve as our guide in later sections, where we will formalize this notion in $\BZFC$. 

\subsection{Cardinal numbers by abstraction}
As a starting point, we take the following passage from Cantor:
\begin{quote}
     We call by the name ``power" or ``cardinal number" of $M$ the general concept which, by means of our active faculty, arises from the aggregate $M$ when we make abstraction of the nature of its various elements $m$ and of the order in which they are given. \cite[p.~85]{Cantor1955}
\end{quote}
He calls what remains of an element after the above-mentioned abstraction a \emph{unit}. Thus, a cardinal number to Cantor is a set of such units. Although this is not exactly precise, we can use this as a starting point. 

\smallskip

While Cantor only talks about the elements of a set, we need to consider what happens in the whole realm of a set. So, we will informally think of the cardinality of a set as what remains when we abstract away all the particulars about the elements of the realm of said set. 

For example, the cardinal number of the set $A=\langle \{a,b\}| \{c\}| \{d\}\rangle$ is something like
$$|A|=\langle \{\bullet,\bullet\}| \{\bullet\}| \{\bullet\}\rangle,$$
Where each instance of $\bullet$ represents a unit. Crucially, this object captures only the structure, or the size, of $A$ and nothing else.

\begin{figure}[H]
\centering
\begin{tikzpicture}[line cap=round,line join=round,>=triangle 45,scale=0.6]
\clip (-3.8,-2.2) rectangle (3.8,2.2);
\draw [line width=0.6pt] (-1,0) circle (2);
\draw [line width=0.6pt] ( 1,0) circle (2);
\node at (-1.9, 0.5) {$\bullet$}; \node at (-1.9, 0.5) [anchor=west] {$a$};
\node at (-1.9,-0.5) {$\bullet$}; \node at (-1.9,-0.5) [anchor=west] {$b$};
\node at (-0.2, 0)   {$\bullet$}; \node at (-0.2, 0)   [anchor=west] {$c$};
\node at ( 1.6, 0)   {$\bullet$}; \node at ( 1.6, 0)   [anchor=west] {$d$};
\end{tikzpicture}
\begin{tikzpicture}[line cap=round,line join=round,>=triangle 45,scale=0.6]
\clip (-3.8,-2.3) rectangle (3.8,2.3);
\draw [line width=0.6pt] (-1,0) circle (2);
\draw [line width=0.6pt] (1,0) circle (2);
\node at (-1.75,0.5) {$\bullet$};
\node at (-1.75,-0.5) {$\bullet$};
\node at (0,0)   {$\bullet$};
\node at (1.75,0)   {$\bullet$};
\end{tikzpicture}
\caption{The set $\langle \{a,b\}|\{c\}|\{d\}\rangle$ and its cardinal number.}
\label{fig:cardexampleabc}
\end{figure}
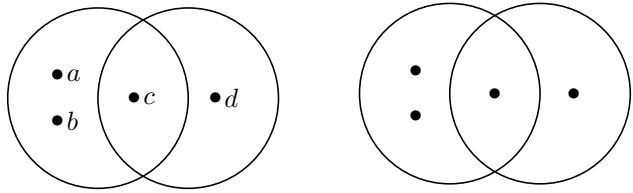

We will use the following criterion for equality between cardinal numbers: Two cardinal numbers are to be considered equal if they are both the cardinality of the same set. Similarly, we think of them as unequal if they are only the cardinalities of sets that are unequal:
\begin{align*}
\kappa= \mu \quad&\text{ iff }\quad\text{there is a set with cardinality both $\kappa$ and $\mu$;} \\[0.5em]
\kappa \neq \mu \quad&\text{ iff }\quad A\neq B\text{ for any pair of sets $A$ and $B$ such that  } \\
&\phantom{\text{ iff }}\quad\text{$\kappa$ is the cardinality of $A$ and $\mu$ is the cardinality of $B$.}
\end{align*}

Two cardinal numbers are therefore equal if they agree on how many units are in the classical, inconsistent, and incomplete parts. They are unequal if their structure alone mandates that one has a unit that the other does not. As an example, $$\langle \{\bullet,\bullet\}| \{\bullet\}|\{\bullet\}\rangle\neq \langle \{\bullet\}| \{\bullet\}|\{\bullet\}\rangle,$$
\smallskip
since the $!$-extension of the former has three units while the $?$-extension of the latter only has two. So there will be a unit that belongs to the former but not the latter.
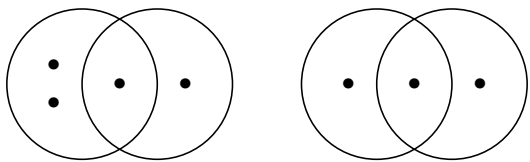
\begin{figure}[H]
\centering

\begin{tikzpicture}[line cap=round,line join=round,scale=0.5]
\clip (-3.8,-2.3) rectangle (3.8,2.3);
\draw [line width=0.6pt] (-1,0) circle (2);
\draw [line width=0.6pt] ( 1,0) circle (2);
\node at (-1.75, 0.5) {$\bullet$};
\node at (-1.75,-0.5) {$\bullet$};
\node at ( 0,    0)   {$\bullet$};
\node at ( 1.75, 0)   {$\bullet$};
\end{tikzpicture}
\begin{tikzpicture}[line cap=round,line join=round,scale=0.5]
\clip (-3.8,-2.3) rectangle (3.8,2.3);
\draw [line width=0.6pt] (-1,0) circle (2);
\draw [line width=0.6pt] ( 1,0) circle (2);
\node at (-1.75, 0) {$\bullet$};
\node at ( 0,    0) {$\bullet$};
\node at ( 1.75, 0) {$\bullet$};
\end{tikzpicture}
\caption{A pair of cardinals that are unequal to each other.}
\label{fig:twocard}
\end{figure}

The cardinals are then ordered as follows:
\begin{align*}
\kappa\leq \mu \quad&\text{ iff }\quad A\subseteq B\text{ for some pair of sets $A$ and $B$ such that  } \\
&\phantom{\text{ iff }}\quad\text{$\kappa$ is the cardinality of $A$ and $\mu$ is the cardinality of $B$;}\\[0.5em]
\kappa \nleq \mu \quad&\text{ iff }\quad A\nsubseteq B\text{ for any pair of sets $A$ and $B$ such that  } \\
&\phantom{\text{ iff }}\quad\text{$\kappa$ is the cardinality of $A$ and $\mu$ is the cardinality of $B$.}
\end{align*}

\subsection{Counting, tallies, and equinumerosity}
Here we will take one more step towards a formal account of cardinality. We start by dispensing with the notions of units and abstraction in favor of a story of \emph{counting} sets using \emph{pebbles}. The only thing we assume about these pebbles is that we have an inexhaustible supply of them.

We count the elements of a set $A$ by assigning each element in the realm of $A$ a pebble in an injective manner. We call the resulting set of pebbles a \emph{tally} of $A$, and we will call any set of pebbles a \emph{tally}. We can further relate tallies in such a way that we can think of the cardinal numbers as equivalence classes\footnote{See \cite[Definition 6.1]{KHOMSKII_ODDSSON_2024} for equivalence relations in $\BZFC$.} of tallies:
\begin{align*}
\mathcal{C}\approx \mathcal{D} \quad&\text{ iff }\quad\text{$\mathcal{C}$ and $\mathcal{D}$ are both tallies of the same set;} \\[0.5em]
\mathcal{C} \not\approx \mathcal{D} \quad&\text{ iff }\quad A\neq B\text{ for any pair of sets $A$ and $B$ such that  } \\
&\phantom{\text{ iff }}\quad\text{$\mathcal{C}$ is a tally of $A$ and $\mathcal{D}$ is a tally of $B$.}
\end{align*}

Similarly, we can order tallies as follows:
\begin{align*}
\mathcal{C}\leq \mathcal{D} \quad&\text{ iff }\quad A\subseteq B\text{ for some pair of sets $A$ and $B$ such that  } \\
&\phantom{\text{ iff }}\quad\text{$\mathcal{C}$ is a tally of $A$ and $\mathcal{D}$ is a tally of $B$;}\\[0.5em]
\mathcal{C} \nleq \mathcal{D} \quad&\text{ iff }\quad A\nsubseteq B\text{ for any pair of sets $A$ and $B$ such that  } \\
&\phantom{\text{ iff }}\quad\text{$\mathcal{C}$ is a tally of $A$ and $\mathcal{D}$ is a tally of $B$.}
\end{align*}

This now leads us to the following way to use tallies to compare the sizes of sets: Two sets have the same size if we can count them using the same tally. And their sizes are different if we have to use different tallies to count them:
\begin{align*}
A\cong B \quad&\text{ iff }\quad\text{there is a set of pebbles that is a tally for both $A$ and $B$;} \\[0.5em]
A\not\cong B \quad&\text{ iff }\quad\text{for any pair of tallies for $A$ and $B$, one will have} \\
&\phantom{\text{ iff }}\quad\text{a pebble that the other does not.}\footnotemark
\end{align*}\footnotetext{When we say that a set $A$ (or a tally) does not have an element $a$, we mean that $a\notin A.$}
This tells us that $A\cong B$ is true precisely when $A$ and $B$ have the same structure, meaning that the classical, inconsistent, and incomplete parts of $A$ have the same cardinality as the corresponding parts of
$B$ (in the classical sense).

The interesting part is the clause for when $A\cong B$ is false. This happens when, no matter how we tally the two sets, there will always be a pebble in one tally that is not in the other. 
\begin{example}
    Consider the set $A=\langle \{a, b\}|\{c\}|\{d\}\rangle$. Clearly, $A\cong A$, as is the case with any set. We also find that $A\not\cong A$. To see why, take any pair of tallies $\mathcal{C}=\langle\{p_1, p_2\}|\{p_3\}|\{p_4\}\rangle$ and $\mathcal{D}=\langle\{q_1,q_2\}|\{q_3\}|\{q_4\}\rangle$ of $A$. Now, $\mathcal{C}^!=\{p_1,p_2,p_3\}$ has three elements, while $\mathcal{D}^?=\{q_3,q_4\}$ only has two. It follows that there is a pebble in $\mathcal{C}^!$ that is not in $\mathcal{D}^?$. Said pebble will therefore be an element of $\mathcal{C}$, but a non-element of $\mathcal{D}$.
\end{example}

We can also use pebbles to say that the size of $A$ is at most that of $B$: 
\begin{align*}
A\preceq B \quad&\text{ iff }\quad\text{there is a tally of $A$ that is a subset of a tally of $B$;} \\[0.5em]
A\not\preceq B \quad&\text{ iff }\quad\text{for any pair of tallies for $A$ and $B$, there will be a pebble} \\
&\phantom{\text{ iff }}\quad\text{that is in the tally of $A$ but not in the tally of $B$.}
\end{align*}

Thus, $A\preceq B$ tells us that we can fit a tally of $A$ within a tally of $B$, while $A\not\preceq B$ tells us that no tally of $A$ will fit within a tally of $B$, in the sense that there will always be a pebble left over.

\begin{proposition}
    If $A$ and $B$ are sets with tallies $\mathcal{C}$ and $\mathcal{D}$, then
    $$A\cong B\Leftrightarrow \mathcal{C}\approx \mathcal{D}\quad\text{ and }\quad A\preceq B\Leftrightarrow \mathcal{C}\leq \mathcal{D}.$$
\end{proposition}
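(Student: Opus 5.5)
The plan is to prove both strong equivalences by splitting each into two parts: agreement of truth (the $\leftrightarrow$ between the positive clauses) and agreement of falsity (the $\leftrightarrow$ between the negative clauses). Everything is informal at this stage, so we work directly from the clauses given above. The key tool is a \emph{transfer principle}. If $\mathcal{C}$ is a tally of $A$ via an injection $f$ from $\rlm(A)$ onto $\rlm(\mathcal{C})$, then $\mathcal{C}=f[A]$. Since $f$ is an injection, $f$ maps $A_\vb,A_\vt,A_\vn$ onto $\mathcal{C}_\vb,\mathcal{C}_\vt,\mathcal{C}_\vn$. Hence two sets share a tally exactly when their three parts have pairwise the same classical size, and likewise for two tallies.

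\emph{Truth clauses.} Suppose $A\cong B$, so there is a common tally $\mathcal{E}$. Then $A$ has tallies $\mathcal{C}$ and $\mathcal{E}$, so composing injections gives a bijection $\mathcal{C}\to\mathcal{E}$ preserving the three parts. Similarly $\mathcal{D}$ and $\mathcal{E}$ are part-wise equinumerous. Hence $\mathcal{C}$ and $\mathcal{D}$ are part-wise equinumerous. Then $\mathcal{D}$ is itself a tally of $\mathcal{C}$ (pebbles being sets), and $\mathcal{C}$ is a tally of $\mathcal{C}$ via the identity, so $\mathcal{C}\approx\mathcal{D}$. The converse is similar: if $\mathcal{C},\mathcal{D}$ are tallies of a common $E$, then $A$ and $B$ are part-wise equinumerous with $E$. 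We may then build a single tally of both from fresh pebbles, using the inexhaustible supply.

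For $\preceq$ and $\leq$, we use that inclusion $X\subseteq Y$ amounts to $X^!\subseteq Y^!$ and $X^?\subseteq Y^?$. So $A\preceq B$ and $\mathcal{C}\leq\mathcal{D}$ both reduce to the same condition. That condition is the existence of an injection $g:\rlm(A)\to\rlm(B)$ with $g[A^!]\subseteq B^!$ and $g[A^?]\subseteq B^?$. Equivalently, it is a classical size condition on the parts together with how each part may land in the parts of $B$. This condition is invariant under replacing $A$ by any tally of it, so both sides are equivalent to it.

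\emph{Falsity clauses.} Here we show that both $A\not\cong B$ and $\mathcal{C}\not\approx\mathcal{D}$ are equivalent to an intrinsic condition $(\ast)$. Condition $(\ast)$ says: for every pair of part-wise copies $X$ of $A$ and $Y$ of $B$ over a common pool, $X^!\not\subseteq Y^?$ or $Y^!\not\subseteq X^?$. This uses the earlier equivalence $X\neq Y\leftrightarrow X^!\not\subseteq Y^?\lor Y^!\not\subseteq X^?$. Classically, this should reduce to the cardinal inequality $|A^!|_{Cl}>|B^?|_{Cl}$ or $|B^!|_{Cl}>|A^?|_{Cl}$, possibly refined to account for overlaps. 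The analogous reduction for $\not\preceq$ and $\nleq$ uses $A\not\subseteq B\leftrightarrow A^!\not\subseteq B^?$, which gives the condition that $A^!$ does not fit injectively into $B^?$. Since tallies are part-wise copies and part-wise copies can be realized as tallies, both sides match.

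The main obstacle is this falsity direction, and in particular handling ``for \emph{any} pair'' over all tallies, or all sets with those tallies, correctly. One must check that the pebbles-in-one-not-the-other formulation quantifies over exactly the same configurations as $X\neq Y$ for sets with those tallies. I would settle it by showing that the extremal configuration, where the overlaps between the copies are maximized, witnesses both failures simultaneously.
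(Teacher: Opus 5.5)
\textbf{Gap in the falsity half of $A\cong B\Leftrightarrow\mathcal{C}\approx\mathcal{D}$.} Your truth clauses and your handling of $\preceq/\leq$ are fine. For the falsity of $\preceq$, reducing both sides to $A^!\not\preceq B^?$ works. The falsity half of the $\cong/\approx$ equivalence is the only nontrivial part and exactly what the paper proves, and you do not establish it. Saying ``part-wise copies can be realized as tallies'' is true for one set at a time. What is needed is to realize a \emph{pair} $A',B'$ jointly, keeping their overlap pattern. You flag this yourself as the main obstacle and propose to settle it with an overlap-maximizing configuration plus a classical cardinal criterion. That plan is not carried out, and it is unclear what the extremal configuration is or why it would suffice.

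The criterion you suggest is also wrong. Take $A=\langle\{a\}|\emptyset|\{c\}\rangle$ (size $\bb+\nn$) and $B=\{b\}$. Then $|A^!|_{Cl}=|B^?|_{Cl}=|B^!|_{Cl}=|A^?|_{Cl}=1$, yet $A\ncong B$: tallies $\langle\{p\}|\emptyset|\{q\}\rangle$ and $\{r\}$ could only avoid a distinguishing pebble if $p=r=q$. The correct criterion is Theorem~\ref{short}(4), with the extra clauses $A_\vb\npreceq B_\vn$ and $B_\vb\npreceq A_\vn$. That is a harder result and is not needed here.

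\textbf{The missing idea: transport the whole pair with one injection.}
\begin{itemize}
\item Suppose $\mathcal{C}\not\approx\mathcal{D}$, and let $\mathcal{C}',\mathcal{D}'$ be tallies of $A,B$. Composing countings, $\mathcal{C}',\mathcal{D}'$ are sets with tallies $\mathcal{C},\mathcal{D}$, so $\mathcal{C}'\neq\mathcal{D}'$. Hence $A\ncong B$.
\item Conversely, suppose $A\ncong B$, and let $A',B'$ be any sets with tallies $\mathcal{C},\mathcal{D}$. Choose a \emph{single} injection $f$ on $\rlm(A')\cup\rlm(B')$ into pebbles.
\begin{itemize}
\item Then $f[A'],f[B']$ are tallies of $A,B$, so $f[A']\neq f[B']$.
\item That is, $f[(A')^!]\not\subseteq f[(B')^?]$ or $f[(B')^!]\not\subseteq f[(A')^?]$.
\item Because $f$ is injective on the union of the realms, this pulls back to $(A')^!\not\subseteq (B')^?$ or $(B')^!\not\subseteq (A')^?$, i.e.\ $A'\neq B'$.
\end{itemize}
\end{itemize}
This is the paper's argument. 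Using one injection on the union is exactly what ensures both clauses quantify over the same configurations, so no extremal configuration or cardinal computation is required.
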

\begin{proof}
    We only show that $A\not\cong B\leftrightarrow \mathcal{C}\not\approx\mathcal{D}$. First, assume that $\mathcal{C}\not\approx\mathcal{D}$, and let $\mathcal{C}'$ and $\mathcal{D}'$ be tallies of $A$ and $B$, respectively. Then $\mathcal{C}'$ and $\mathcal{D}'$ are themselves sets with tallies $\mathcal{C}$ and $\mathcal{D}$, so $\mathcal{C}'\neq\mathcal{D}'$. Thus, $A\not\cong B$.

    Now assume that $A\not\cong B$, and let $A'$ and $B'$ be sets with tallies $\mathcal{C}$ and $\mathcal{D}$, respectively. Let $f$ be any injection that replaces each element in $\rlm(A')\cup\rlm(B')$ with a pebble. Then, $f[A']$ and $f[B']$ are also tallies of $A$ and $B$, so $f[A']\neq f[B']$. Since $f$ is an injection, we also get $A'\neq B'$.
    
\end{proof}

\section{Equinumerosity in $\BZFC$}\label{EquinumerositySection}
We will now formalize our notion of equinumerosity in $\BZFC$ and establish some of its consequences.

\smallskip

In the previous section, we used the informal notion of sets of pebbles to compare the sizes of sets. The only assumption we made about the pebbles is that there are enough of them. Since the only objects of $\BZFC$ are sets, we can simply take the convention that any set is a pebble.

With this in mind, we call any injection from $\rlm (A)$ a \emph{counting} of $A$. We denote the class of all countings of $A$ by $\mathrm{Count}(A)$. In this way, we think of a tally of a set $A$ as the result of counting it. That is, if $f$ is a counting of $A$, we think of $f[A]$ as the resulting tally. This leads us to the following definition.

\begin{definition} Given two sets $A$ and $B$, we write $A\cong B$, and say that $A$ and $B$ are \emph{equinumerous}, if $$\exists f\in \mathrm{Count}(A)\exists g\in \mathrm{Count}(B)(f[A]=g[B]).$$ Similarly, we write $A \preceq B$, and say that the \emph{cardinality of} $A$ \emph{is at most that of} $B$, if $$\exists f\in \mathrm{Count(A)}\exists g\in \mathrm{Count}(B)(f[A]\subseteq g[B]).$$
\end{definition}

\begin{remark}
Keeping in mind the falsity conditions for the restricted quantifiers, we see that 
\begin{align*}
A\not\cong B \quad&\text{ iff }\quad\forall f\in \mathrm{Count(A)}\forall g\in \mathrm{Count}(B)(f[A]\neq g[B]); \\[0.5em]
A\not\preceq B \quad&\text{ iff }\quad\forall f\in \mathrm{Count(A)}\forall g\in \mathrm{Count}(B)(f[A]\not\subseteq g[B]).
\end{align*}
Our formal definition, therefore, matches our informal notion from the previous section. 
\end{remark}

\begin{remark}
If $A$ and $B$ are classical sets, then the statements $A\cong B$ and $A \preceq B$ match those from classical set theory. We can therefore help ourselves to the usual results about them. 
\end{remark}

It turns out that in order to evaluate $A\cong B$, it suffices to quantify over the countings of $A$. This way, $A\cong B$ can be understood as saying that $B$ is equal to a tally of $A$.

\begin{proposition}
    For all $A$ and $B$,
    \begin{align*}
        A\cong B&\Leftrightarrow \exists f\in \mathrm{Count(A)}(f[A]=B);\\[0.5em]
        A\preceq B&\Leftrightarrow \exists f\in \mathrm{Count(A)}(f[A]\subseteq B).
    \end{align*}
\end{proposition}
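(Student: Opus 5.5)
Throughout, $\Leftrightarrow$ is a strong equivalence, so we must check both the truth clause and the falsity clause. Unwinding the restricted quantifiers, this means verifying four implications for each of $\cong$ and $\preceq$: two for truth and two for falsity. I will handle $\cong$ in detail; $\preceq$ is the same argument with $=$ replaced by $\subseteq$ and $\neq$ by $\not\subseteq$.

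\emph{Truth, right to left.} Suppose $f\in\mathrm{Count}(A)$ and $f[A]=B$. I need a counting $g$ of $B$ with $f[A]=g[B]$. The identity function on $\rlm(B)$ is a classical injection, hence a counting of $B$, and $\mathrm{id}[B]=B$. This holds because for an injection the image preserves $B_\vb,B_\vt,B_\vn$, and $B=\langle B_\vb|B_\vt|B_\vn\rangle$.

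\emph{Truth, left to right.} Suppose $f[A]=g[B]$ with $f,g$ countings. Then $\rlm(g[B])=g[\rlm(B)]$, and $g$ restricted to $\rlm(B)$ is a bijection onto this set; let $g^{-1}$ denote its (classical) inverse. Then $h:=g^{-1}\circ f$ is defined on $\rlm(A)$, since $\rlm(f[A])=\rlm(g[B])$, and it is an injection. Using the part-preservation identities $f[A_\vb]=(f[A])_\vb$ and so on, together with the bracket notation, we get $h[A]=g^{-1}[g[B]]=B$.

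\emph{Falsity.} The right-hand side is false iff $f[A]\neq B$ for every counting $f$ of $A$. The left-hand side is false iff $f[A]\neq g[B]$ for all countings $f$ of $A$ and $g$ of $B$.
\begin{itemize}
\item From left to right: take $g=\mathrm{id}_{\rlm(B)}$.
\item From right to left: given countings $f$ and $g$, suppose toward a contradiction that $f[A]\neq g[B]$ fails to be true. By the characterization of $\neq$ in terms of $!$- and $?$-extensions, this means $(f[A])^!\subseteq(g[B])^?$ and $(g[B])^!\subseteq(f[A])^?$. Pull back along $g^{-1}$, extending it injectively outside $g[\rlm(B)]$ if needed so that it is defined on $\rlm(f[A])$. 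The transported function $h=g^{-1}\circ f$ (suitably extended) is then a counting of $A$ with $h[A]^!\subseteq B^?$ and $B^!\subseteq h[A]^?$. This contradicts the assumption $h[A]\neq B$.
\end{itemize}
The key fact here is that injective images and preimages preserve the $!$- and $?$-extensions, so $\neq$ is invariant under a common injection. This is exactly the idea in the proof of the earlier Proposition relating $\cong$ and $\approx$.

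The main obstacle is this last step. The realms of $f[A]$ and $g[B]$ need not coincide, so $g^{-1}$ is not defined on all of $\rlm(f[A])$. I would fix this by choosing an injection $k$ from $\rlm(f[A])\cup\rlm(g[B])$ into sets, with $k$ agreeing with $g^{-1}$ on $g[\rlm(B)]$ and sending the rest to fresh sets outside $\rlm(B)$. Such sets exist in $\mathbb{HCL}$ by a standard cardinality argument. Then I take $h=k\circ f$ and check that $h[A]$ and $B$ stand in the same $!/?$-inclusion relations as $f[A]$ and $g[B]$.
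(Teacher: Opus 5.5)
Your proposal is correct and follows essentially the same route as the paper. The paper proves only the nontrivial falsity direction, by contraposition on $\neg$: it assumes $f[A]^!\subseteq g[B]^?$ and $g[B]^!\subseteq f[A]^?$, and builds the counting $h(x)=g^{-1}(f(x))$ when $f(x)\in g[\rlm(B)]$ and $h(x)=l(x)$, a fresh value outside $\rlm(B)$, otherwise. This is exactly your $k\circ f$, and your handling of the truth clauses, the $\preceq$ case, and the existence of fresh values fills in what the paper calls straightforward.
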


\begin{proof}
    We will only show that $\no\exists f\in \mathrm{Count(A)}(f[A]=B)$ implies $A\not\cong B$. The others are straightforward.

    Since $\rightarrow$ contraposes with respect to $\neg$, it suffices to show that $${?}(A\cong B)\rightarrow {?}(\exists f\in \mathrm{Count(A)}(f[A]=B)),$$
    which is equivalent to 
$$
\begin{aligned}
&\exists f\in \mathrm{Count}(A)\exists g\in \mathrm{Count}(B){?}(f[A]=g[B])\\
&\rightarrow\exists f\in \mathrm{Count}(A){?}(f[A]=B).
\end{aligned}
$$

We fix $f\in\mathrm{Count}(A)$ and $g\in \mathrm{Count}(B)$ such that ${?}(f[A]=g[B])$. Our aim is to find $h\in \mathrm{Count}(A)$ such that ${?}(h[A]=B)$.

\smallskip

Since ${?}(f[A]=g[B])$, we have that $f[A]^!\subseteq g[B]^?$ and $g[B]^!\subseteq f[A]^?$. It also follows that $f[A]_\vb\subseteq g[B]_\vn$ and $g[B]_\vb\subseteq f[A]_\vn$. These are all consequences of the axiom of extensionality.

We pick any injection $l\in \mathrm{Count}(A)$ such that $l[\rlm (A)]\cap \rlm(B)=\emptyset$ and define $h$ as follows:
\begin{align*}   
h(x)&:=\begin{cases}
    g^{-1}(f(x)) & \text{if \ $f(x)\in g[\rlm(B)]$}\\
    l(x) & \text{else}
  \end{cases}
\end{align*}

All that remains is to show that ${?}(h[A]=B)$, or equivalently,  $h[A]^!\subseteq B^?$ and $B^!\subseteq h[A]^?$:  If $x\in A^!$, then $f(x)\in f[A]^!\subseteq g[B]^?.$ So, $h(x)=g^{-1}(f(x))\in B^?$. 

If $y\in B^!$, then $g(y)\in g[B]^!\subseteq f[A]^?$. So, $y=g^{-1}(f(x))$ for some $x\in A^?$. Thus, $y\in h[A]^?$.

\end{proof}

The next theorem tells us that equinumerosity is an equivalence relation in a strong sense.\footnote{Again, see \cite[Definition 6.1]{KHOMSKII_ODDSSON_2024} for an account of equivalence relations in $\BZFC$.} 

\begin{theorem}\label{long}
For all $A$, $B$ and $C$,
\begin{enumerate}
\item $A\cong A$;
\item $A\cong B\Leftrightarrow B\cong A$;
\item if $A\cong B$, then $A\cong C\Leftrightarrow B\cong C$;
\item if $A\cong B$, then $A\preceq C\Leftrightarrow B \preceq C$;
\item if $A\cong B$, then $C \preceq A\Leftrightarrow C \preceq B$;
\item if $A \preceq B$ and $B\preceq C$, then $A\preceq C$.
\end{enumerate}
\end{theorem}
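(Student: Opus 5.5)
The plan is to use the preceding proposition throughout, which reduces $A\cong B$ to $\exists f\in\mathrm{Count}(A)(f[A]=B)$ and $A\preceq B$ to $\exists f\in\mathrm{Count}(A)(f[A]\subseteq B)$. The truth side of each item is then a matter of composing injections. The falsity side is handled by contraposition with respect to $\neg$, in the form of statements about ${?}$. Each strong equivalence $\Leftrightarrow$ splits into two $\leftrightarrow$'s. One concerns truth (${!}$), the other non-falsity (${?}$), and each is proved in both directions.

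For (1), take $f=\mathrm{id}_{\rlm(A)}$, so $f[A]=A$. For (2), if $f\in\mathrm{Count}(A)$ with $f[A]=B$, then $\rlm(B)=f[\rlm(A)]$ and $f^{-1}$ is a counting of $B$ with $f^{-1}[B]=A$. For the ${?}$ half of (2), I use the symmetric definition of $\cong$ with two countings. There ${?}(A\cong B)$ says $\exists f,g\,{?}(f[A]=g[B])$, and ${?}(f[A]=g[B])$ is symmetric by extensionality, so swapping $f$ and $g$ suffices. The truth halves of (3)--(6) follow the same pattern. Given a counting $h$ with $h[A]=B$, a witness $g$ for $B\cong C$ (resp.\ $B\preceq C$) composes to $g\circ h\in\mathrm{Count}(A)$ with $(g\circ h)[A]=g[B]$. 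Here I use that images under injections preserve the $\vb$, $\vt$, $\vn$ parts, so $f[A]=B$ gives $g[f[A]]=g[B]$ as an equality of sets. The other direction uses $h^{-1}$. Item (5) is handled in the same way, by post-composing the witness for $C\preceq A$ with $h$. For (6), if $f[A]\subseteq B$ and $g[B]\subseteq C$, the composite $g\circ f$ is defined on $\rlm(A)$ because $f[\rlm(A)]\subseteq\rlm(B)$, which follows from $f[A]^!\subseteq B^!$ and $f[A]^?\subseteq B^?$. Monotonicity of images then gives $(g\circ f)[A]\subseteq g[B]\subseteq C$.

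The main obstacle is the ${?}$ halves of (3)--(5). An example is showing, under a true $A\cong B$, that ${?}(A\cong C)\rightarrow{?}(B\cong C)$. By the two-counting form, ${?}(A\cong C)$ gives $f\in\mathrm{Count}(A)$ and $g\in\mathrm{Count}(C)$ with $f[A]^!\subseteq g[C]^?$ and $g[C]^!\subseteq f[A]^?$. Fix $h$ with $h[A]=B$; then $f\circ h^{-1}\in\mathrm{Count}(B)$ and $(f\circ h^{-1})[B]=f[A]$ exactly. So the same inclusions witness ${?}(B\cong C)$. This works because ${?}$ of an equality depends only on the $!$- and $?$-extensions, which are transported exactly by $f\circ h^{-1}$. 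Item (4) is analogous, using ${?}(X\subseteq Y)\leftrightarrow X^!\subseteq Y^?$. For (5), I transport the target side with $h$, composing $C$'s witness with a counting of $A$ replaced by one of $B$. Converse directions follow by symmetry, using (2).

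Item (6) is a plain implication, so it needs no falsity clause. The only care needed is that the hypothesis of (3)--(5) is used only for its truth, via $\rightarrow$, so a true (possibly also false) $A\cong B$ suffices.
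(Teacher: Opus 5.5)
Your proposal is correct and follows the same idea as the paper's proof: transport countings along the map $h$ with $h[A]=B$, composing with $h$ or $h^{-1}$ and using $h[\rlm(A)]=\rlm(B)$. The only difference is in the falsity half of (3): the paper argues directly from the universal falsity clause, showing $g[B]=(g\circ h)[A]\neq C$ for every $g\in\mathrm{Count}(B)$, whereas you contrapose through ${?}$ and transport an existential witness via $f\circ h^{-1}$, which is the dual form of the same argument.
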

\begin{proof}
We will only show that $A\cong B\rightarrow (A\not\cong C\rightarrow B\not\cong C)$:

\smallskip

Assume that $A\cong B$ and $A\ncong C$, and fix an arbitrary $g\in \mathrm{Count}(B)$. Since $A\cong B$, we know that there is an $f\in \mathrm{Count}(A)$ such that $f[A]=B$. We also know that $g[f[A]]\neq C$, since $g\circ f$ is a counting of $A$. Thus, $g[B]\neq C.$

\end{proof}

Somewhat surprisingly, the Schröder–Bernstein theorem fails for sets with infinite realms. We will later see that it holds for sets with finite realms (Theorem~\ref{FinShrBern}).

\begin{proposition}\label{FailureOfSchrBern}
    There are sets $A$ and $B$ such that $A\preceq B$ and $B\preceq A$, but $\neg(A\cong B)$.
\end{proposition}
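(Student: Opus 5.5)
The plan is to exploit the fact that the classical part of a set can absorb an inconsistent (or incomplete) element when $\preceq$ is evaluated, but not when $\cong$ is evaluated. I will take
$$A:=\langle\emptyset|\omega|\emptyset\rangle\;(=\omega),\qquad B:=\langle\{\omega\}|\omega|\emptyset\rangle .$$
So $B$ is $\omega$ with one extra element $\omega$ on which membership is both true and false.

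First I translate $\preceq$ and $\cong$ into conditions on the components, using the preceding proposition. By that proposition, $A\preceq B$ holds iff there is an injection $f$ on $\rlm(A)$ with $f[A]\subseteq B$. By the equivalences following Figure~\ref{Balls!?}, $f[A]\subseteq B$ holds iff $f[A]^!\subseteq B^!$ and $f[A]^?\subseteq B^?$. Since $f$ is an injection, $f[A_\vb]=(f[A])_\vb$ and similarly for the other parts, so this amounts to three requirements:
\begin{itemize}
\item $f$ maps $A_\vt$ into $B_\vt$;
\item $f$ maps $A_\vb$ into $B_\vb\cup B_\vt$;
\item $f$ maps $A_\vn$ into $B_\vn\cup B_\vt$.
\end{itemize}
Likewise, $f[A]=B$ holds iff $f[A]^!=B^!$ and $f[A]^?=B^?$. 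For an injection this forces $f$ to restrict to bijections $A_\vb\to B_\vb$, $A_\vt\to B_\vt$ and $A_\vn\to B_\vn$.

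Next I verify the two comparisons for my example. For $A\preceq B$, take $f=\mathrm{id}_\omega$: it sends $A_\vt=\omega$ onto $B_\vt=\omega$, and $A_\vb=A_\vn=\emptyset$, so $f[A]=\omega\subseteq B$. For $B\preceq A$, define $g$ on $\rlm(B)=\omega\cup\{\omega\}$ by $g(n)=n+1$ and $g(\omega)=0$. This is an injection. It maps $B_\vt$ into $A_\vt$ and $B_\vb=\{\omega\}$ into $A_\vt$, which is permitted. Concretely, $g[B]^!=\omega\subseteq A^!$ and $g[B]^?=\omega\setminus\{0\}\subseteq A^?$, so $g[B]\subseteq A$ by the equivalence above.

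Finally I show $\neg(A\cong B)$, i.e.\ that $A\cong B$ is not true. By the preceding proposition it suffices to show that no $f\in\mathrm{Count}(A)$ satisfies $f[A]=B$. Any such $f$ would give $(f[A])_\vb=f[A_\vb]=\emptyset$, whereas $B_\vb=\{\omega\}$. This contradicts extensionality in the form $A=B\leftrightarrow A^!=B^!\land A^?=B^?$.

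The only step I expect to need care is the middle one: justifying the componentwise reading of $f[A]\subseteq B$, which rests on $f[A^!]=f[A]^!$, $f[A^?]=f[A]^?$ and injectivity. Once that is established, the remaining checks are the routine Hilbert-hotel shift $g(n)=n+1$. This shift is also exactly where the infinite realm is used, which explains why Schröder–Bernstein is expected to survive for finite realms.
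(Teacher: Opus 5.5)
Your proof is correct and is essentially the paper's argument. Your two sets are the paper's $B=\omega$ and $C=\langle\{\omega\}|\omega|\emptyset\rangle$, and your shift $g$ carries $C$ onto the paper's $A=\langle\{0\}|\omega\setminus\{0\}|\emptyset\rangle\subseteq\omega$, so you use the same ingredients as the paper ($\omega\subseteq C$, $A\subseteq\omega$, $A\cong C$, and $A_\vb\neq\emptyset=\omega_\vb$), only with the componentwise checks written out explicitly instead of routed through Theorem~\ref{long}.
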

\begin{proof}
    We take $A:=\langle \{0\}\mid\omega\setminus\{0\}\mid\emptyset\rangle$, $B=\omega$, and $C:=\langle \{\omega\}\mid\omega\mid\emptyset\rangle$ (See Figure \ref{figomegaAB}). We have $A\subseteq B$, $B\subseteq C$, and $A\cong C$. Thus, $A\preceq B$ and $B\preceq A$. But clearly,  $\neg (A\cong B).$

    \smallskip
    
\begin{figure}[H]
\centering
\begin{tikzpicture}[line cap=round,line join=round,scale=0.5]
\clip (-3.8,-2.3) rectangle (3.8,2.4);
\draw [line width=0.6pt] (-1,0) circle (2);
\draw [line width=0.6pt] ( 1,0) circle (2);
\node at (-2.9, 2.0) {$A^!$};
\node at ( 2.9, 2.0) {$A^?$};
\node[font=\footnotesize] at (-2.05, 0) {$\bullet$}; \node[font=\footnotesize] at (-2.1, 0) [anchor=west] {$0$};
\node[font=\tiny] at (-0.1, 0.8) {$\bullet$}; \node[font=\tiny] at (-0.1, 0.8) [anchor=west] {$1$};
\node[font=\tiny] at (-0.1, 0.4) {$\bullet$}; \node[font=\tiny] at (-0.1, 0.4) [anchor=west] {$2$};
\node[font=\tiny] at (-0.1,0) {$\bullet$}; \node[font=\tiny] at (-0.1,0) [anchor=west] {$3$};
\node at (0.1,-0.45) {$\vdots$};
\end{tikzpicture}
\begin{tikzpicture}[line cap=round,line join=round,scale=0.5]
\clip (-3.8,-2.3) rectangle (3.8,2.4);
\draw [line width=0.6pt] (-1,0) circle (2);
\draw [line width=0.6pt] ( 1,0) circle (2);
\node at (-2.9, 2.0) {$B^!$};
\node at ( 2.9, 2.0) {$B^?$};
\node[font=\tiny] at (-0.1, 0.8) {$\bullet$}; \node[font=\tiny] at (-0.1, 0.8) [anchor=west] {$0$};
\node[font=\tiny] at (-0.1, 0.4) {$\bullet$}; \node[font=\tiny] at (-0.1, 0.4) [anchor=west] {$1$};
\node[font=\tiny] at (-0.1,0) {$\bullet$}; \node[font=\tiny] at (-0.1,0) [anchor=west] {$2$};
\node at (0.1,-0.45) {$\vdots$};
\end{tikzpicture}    \begin{tikzpicture}[line cap=round,line join=round,scale=0.5]
\clip (-3.8,-2.3) rectangle (3.8,2.4);
\draw [line width=0.6pt] (-1,0) circle (2);
\draw [line width=0.6pt] ( 1,0) circle (2);
\node at (-2.9, 2.0) {$C^!$};
\node at ( 2.9, 2.0) {$C^?$};
\node[font=\footnotesize] at (-2.05, 0) {$\bullet$}; \node[font=\footnotesize] at (-2.1, 0) [anchor=west] {$\omega$};
\node[font=\tiny] at (-0.1, 0.8) {$\bullet$}; \node[font=\tiny] at (-0.1, 0.8) [anchor=west] {$0$};
\node[font=\tiny] at (-0.1, 0.4) {$\bullet$}; \node[font=\tiny] at (-0.1, 0.4) [anchor=west] {$1$};
\node[font=\tiny] at (-0.1,0) {$\bullet$}; \node[font=\tiny] at (-0.1,0) [anchor=west] {$2$};
\node at (0.1,-0.45) {$\vdots$};
\end{tikzpicture}
\caption{}
\label{figomegaAB}
\end{figure}
\end{proof}

The next theorem tells us  that we can completely characterize our notion of equinumerosity by only comparing the sizes of classical sets.

\begin{theorem}\label{short}
For all $A$ and $B$,

\begin{enumerate} 
    \item $A \preceq B\quad\text{ iff }\quad \rlm(A)\preceq \rlm(B)\text{ and } A^!\preceq B^!\text{ and }A^?\preceq B^?\text{ and }A_\vt \preceq B_\vt$;
    \item $A \not\preceq B\quad\text{ iff }\quad A^!\not\preceq B^?$;
    \vspace{0.5em}
    \item $A \cong B\quad\text{ iff }\quad A_\vt\cong B_\vt\text{ and } A_\vb\cong B_\vb\text{ and }A_\vn\cong B_\vn$;
    \item $A\ncong B\quad\text{ iff }\quad A^!\npreceq B^?\text{ or } B^!\npreceq A^?\text{ or } A_\vb \npreceq B_\vn\text{ or }B_\vb\npreceq A_\vn$.
\end{enumerate}
\end{theorem}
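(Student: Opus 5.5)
Throughout I will use the reduction from the preceding proposition: $A\preceq B$ iff some counting $f$ of $A$ has $f[A]\subseteq B$, and $A\cong B$ iff some counting has $f[A]=B$. I will also use that for an injection $f$ the image commutes with all the components: $f[A]^!=f[A^!]$, $f[A]^?=f[A^?]$, and $f[A]_\vx=f[A_\vx]$ for $\vx\in\{\vt,\vb,\vn\}$. The extensional characterizations from Lemma 4.2 will be used freely, in particular $X\subseteq Y$ iff $X^!\subseteq Y^!$ and $X^?\subseteq Y^?$. The positive clauses (1) and (3) concern when a statement is true. The negative clauses (2) and (4) concern when its negation is true, and I will handle them through the falsity conditions, using the unrestricted forms $\forall f\,\forall g$.

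\emph{Clause (1).} For the forward direction, suppose $f[A]\subseteq B$ with $f$ injective on $\rlm(A)$. Then $f[A^!]\subseteq B^!$ and $f[A^?]\subseteq B^?$. Taking unions and intersections gives $f[\rlm(A)]\subseteq\rlm(B)$ and $f[A_\vt]\subseteq B_\vt$. Restricting $f$ therefore yields all four classical injections.

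The converse is the main obstacle. We are given four separate classical injections and must build a single injection $h:\rlm(A)\to\rlm(B)$ that maps $A^!$ into $B^!$ and $A^?$ into $B^?$. Equivalently, $h$ must send $A_\vt$ into $B_\vt$, $A_\vb$ into $B^!$, $A_\vn$ into $B^?$, and the whole realm injectively.

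My plan is a cardinal-arithmetic case analysis in $\mathbb{HCL}$, using choice. In the finite case I would argue by counting. Write $a_\vx=|A_\vx|$ and $b_\vx=|B_\vx|$. Send $A_\vt$ into $B_\vt$, then fill $A_\vb$ greedily from the remaining part of $B^!$, then fill $A_\vn$ from the remaining part of $B^?$. The constraints to check are $a_\vt\le b_\vt$, $a_\vt+a_\vb\le b_\vt+b_\vb$, $a_\vt+a_\vn\le b_\vt+b_\vn$, and $a_\vt+a_\vb+a_\vn\le b_\vt+b_\vb+b_\vn$. These are exactly the four hypotheses, and a Hall-type argument shows they suffice. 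The key point for Hall's condition is that $A_\vb$ and $A_\vn$ compete only for the leftover of $B_\vt$, and the total-realm inequality controls that competition.

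In the infinite case I would argue using $\kappa+\lambda=\max(\kappa,\lambda)$. I will check the required inequalities case by case depending on which parts are infinite, and this bookkeeping is the hardest part. The tightest sub-case is $A_\vb$ and $A_\vn$ both needing the leftover of $B_\vt$. There I would split that leftover into two pieces of the same infinite size.

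\emph{Clause (2).} $A\not\preceq B$ means $f[A]\not\subseteq g[B]$ for all countings $f,g$. By Lemma 4.2 this says $f[A^!]\not\subseteq g[B^?]$ for every pair of injections. If $A^!\preceq B^?$ classically, choose $f,g$ realizing such an embedding: first embed $A^!$ into $B^?$, then extend $f$ to all of $\rlm(A)$ using fresh pebbles. This contradicts the hypothesis. Conversely, if $A^!\not\preceq B^?$, then no pair $f,g$ can have $f[A^!]\subseteq g[B^?]$, since otherwise $g^{-1}\circ f$ would be a classical injection from $A^!$ into $B^?$. Since the meta-statements are classical, both directions go through.

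\emph{Clause (3).} If $f[A]=B$, then $f$ restricts to bijections between the corresponding components. Conversely, I glue the three given component bijections. This is legitimate because the components are pairwise disjoint and cover the realms.

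\emph{Clause (4).} This mirrors (2), using $f[A]\neq g[B]$ iff $f[A]^!\not\subseteq g[B]^?$ or $g[B]^!\not\subseteq f[A]^?$. Negating shows that ${?}(A\cong B)$ holds iff there are injections with $f[A^!]\subseteq g[B^?]$ and $g[B^!]\subseteq f[A^?]$. This in turn forces $f[A_\vb]\subseteq g[B_\vn]$ and $g[B_\vb]\subseteq f[A_\vn]$, as in the earlier proposition.

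So I must show that the four classical embeddings $A^!\preceq B^?$, $B^!\preceq A^?$, $A_\vb\preceq B_\vn$, and $B_\vb\preceq A_\vn$ jointly yield such a pair $f,g$. I would construct it as a matching between the realms: match $A_\vb$ into $B_\vn$ and $B_\vb$ into $A_\vn$, and match the rest of $A_\vt$ against the remaining parts of $B_\vt\cup B_\vn$ and symmetrically. This again needs a cardinality case analysis of the same kind as in (1), and I expect that to be the delicate part. The other direction of (4) is immediate from the forced inclusions.
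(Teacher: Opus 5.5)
\textbf{Gap: the two constructive steps are only announced, and the order you describe fails for infinite sets.} Your arguments for (2), (3), the forward direction of (1), the finite case of (1), and the easy direction of (4) all match the paper. In the finite case of (1), Hall's condition reduces exactly to the four inequalities, i.e.\ to the paper's estimate $\max\{a_\vb-b_\vb,0\}+\max\{a_\vn-b_\vn,0\}\le b_\vt-a_\vt$. The problem is the right-to-left direction of (1) in the infinite case and the hard direction of (4).

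In the backward direction of (4) you propose to match $A_\vb$ into $B_\vn$ and $B_\vb$ into $A_\vn$, and then match the rest. Take $A_\vt=A_\vb=B_\vn=\emptyset$, $|A_\vn|=|B_\vb|=\aleph_0$ and $|B_\vt|=1$. All four hypotheses hold. But if the injection $B_\vb\to A_\vn$ is onto, the element of $B_\vt\subseteq B^!$ has nowhere to go, since it must be matched into $A^?=A_\vn$. The missing idea is the paper's case split, via choice, on the comparability of $A_\vt$ and $B_\vt$:
\begin{itemize}
\item If $A_\vt\cong B_\vt$, match them by a bijection. Then $A_\vb\to B_\vn$ and $B_\vb\to A_\vn$ can be chosen independently, and the rest of $A_\vn$ goes to fresh points.
\item If $|A_\vt|<|B_\vt|$, inject $A_\vt$ into $B_\vt$ by some $f_1$ and $A_\vb$ into $B_\vn$. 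Then embed the whole block $(B_\vt\setminus f_1[A_\vt])\cup B_\vb$ into $A_\vn$ at once. That this block fits is a classical exercise from $B^!\preceq A^?$ together with the \emph{strict} inequality. Without strictness it can fail, e.g.\ for countably infinite $A_\vt,B_\vt$, a non-surjective $f_1$, and $A_\vn=\emptyset$.
\item The remaining case is symmetric, with the counting built on $B$'s side.
\end{itemize}

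The infinite case of (1) has the same defect. ``First send $A_\vt$ into $B_\vt$, then use the leftover'' breaks when that injection is onto. Take $|A_\vt|=|B_\vt|=\aleph_0$, $|A_\vb|=1$ and $A_\vn=B_\vb=B_\vn=\emptyset$; all four hypotheses hold, yet the element of $A_\vb$ has nowhere to go. Your remark about splitting the leftover points the right way, but the room must be reserved in advance.

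The paper does this, when $B_\vt$ is infinite, by passing to
\[
C=\langle B_\vb\times\{0\}\mid B_\vt\times\{1,2,3\}\mid B_\vn\times\{4\}\rangle\cong B.
\]
Then $A_\vt$ gets its own copy of $B_\vt$. So does the part of $A_\vb$ that an injection $A^!\to B^!$ sends into $B_\vt$, and likewise the part of $A_\vn$ that an injection $A^?\to B^?$ sends into $B_\vt$.

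When $B_\vt$ is finite but $B_\vb$ or $B_\vn$ is infinite, absorption gives $A_\vb\preceq B_\vb$ or $A_\vn\preceq B_\vn$ directly, and the finite count handles the remaining parts. Until these constructions are supplied, the right-to-left direction of (1) and the hard direction of (4) remain unproved.
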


\begin{proof}\

\medskip

\noindent\textbf{1)} We only show the direction from right to left, as the other is straightforward. Assume that $\rlm(A)\preceq \rlm(B)$, $A^!\preceq B^!$, $A^?\preceq B^?$, and $A_\vt \preceq B_\vt$. We now construct the required injection $f$ by considering cases based on whether the sets $B_\vt$, $B_\vb$, and $B_\vn$ are finite.

\smallskip

Case 1: If $B_\vt$ is infinite, then $B\cong C,$ where 
$$C:= \big\langle B_\vb\times \{0\}\mid (B_\vt\times \{1,2,3\})\mid B_\vn\times \{4\}\big\rangle.$$
We will show that $A\preceq C.$
Fix injections $g:A^!\to B^!$, $h:A^?\to B^?$, and $j:A_\vt \to B_\vt$.\footnote{In this case we don't need to use that $\rlm(A)\preceq \rlm(B)$.} We let 
$$ f(x) = 
  \begin{cases}
    (g(x),0), & \text{if } x\in A_\vb\text{ and }g(x)\in B_\vb \\
    (g(x),1), & \text{if } x\in A_\vb\text{ and }g(x)\in B_\vt \\
    (j(x),2), & \text{if } x\in A_\vt \\
    (h(x),3), & \text{if } x\in A_\vn\text{ and }h(x)\in B_\vt \\
    (h(x),4), & \text{if } x\in A_\vn\text{ and }h(x)\in B_\vn. \\
  \end{cases}
$$ 
Clearly, $f$ is an injection defined on $\rlm(A)$. To see that $f[A]\subseteq C$, one checks by cases that if $x\in A^!$, then $f(x)\in C^!$, and if $x\in A^?$, then $f(x)\in C^?$. We leave this to the reader.

\smallskip

Case 2: If $\rlm(B)$ is finite, then so is $\rlm(A)$. The sets $A_\vt$, $A_\vb$, $A_\vn$, $B_\vt$, $B_\vb$, and $B_\vn$ are finite and have classical natural numbers associated with them. Let's call them $a_\vt$, $a_\vb$, $a_\vn$, $b_\vt$, $b_\vb$, and $b_\vn$, respectively. We have the following inequalities:
\begin{align*}
    a_\vt+a_\vb+a_\vn &\leq b_\vt+ b_\vb+b_\vn\\
    a_\vt+a_\vb &\leq b_\vt+ b_\vb\\
    a_\vt+a_\vn &\leq b_\vt+ b_\vn\\
    a_\vt&\leq b_\vt
\end{align*}
With a little finesse, we get
$$\mathrm{max}\{(a_\vb-b_\vb),0\}+\mathrm{max}\{(a_\vn-b_\vn),0\}\leq b_\vt-a_\vt.$$
We can now construct our counting $f$ of $A$ by sending each element of $A_\vt$ to an element in $B_\vt$. Then $b_\vt-a_\vt$ elements remain in $B_\vt$. We then send as many elements from $A_\vb$ into $B_\vb$ as we can. The remainder we send to what is left in $B_\vt$. There are now $b_\vt -(a_\vt+\mathrm{max}\{(a_\vb-b_\vb),0\})$ elements remaining in $B_\vt$. Finally, we send as many elements from $A_\vn$ to $B_\vn$ as possible, and the rest we send to what remains in $B_\vt$. This way, $f$ is an injection from $\rlm(A)$ with $f[A]\subseteq B$.

\smallskip

Case 3: Suppose that $B_\vt$ and $B_\vb$ are finite but $B_\vn$ is infinite. Then $A_\vn\preceq B_\vn$ since $A^?\preceq B^?$. Moreover, $a_\vt+a_\vb\leq b_\vt+b_\vb$ still holds, so $a_\vb-b_\vb\leq b_\vt-a_\vt$. We can therefore adjust our construction from the previous case to send all the elements from $A_\vn$ to $B_\vn$.

\smallskip

Case 4: $B_\vt$ and $B_\vn$ are finite but $B_\vb$ is infinite. This is similar to case 3.

\smallskip

Case 5: $B_\vt$ is finite, but $B_\vb$ and $B_\vn$ are infinite. Then, $A_\vt\preceq B_\vt$, $A_\vb\preceq B_\vb$, and $A_\vn\preceq B_\vn$. We can then construct the desired counting $f$ in the obvious way.

\medskip

\noindent\textbf{2)} Assume that $A\not\preceq B$, and let $f$ be a counting of $A^!$. We let $f'$ be any injective extension of $f$ to $\rlm(A)$. Then $f'[A]\not\subseteq B$, and therefore $f'[A^!]\not\subseteq B^?$. Now, $f[A^!]=f'[A^!]\not\subseteq B^?$.

Next, assume that $A^!\not\subseteq B^?$ and let $f$ be a counting of $A$. Then, the restriction of $f$ to $A^!$ is a counting of $A^!$. Thus, $f[A^!]\not\subseteq B^?$, and therefore $f[A]\not\subseteq B$.

\medskip

\noindent\textbf{3)} Assume that $A\cong B$. Then there is a counting $f$ of $A$ such that $f[A]=B$. It follows that $f[A_\vt]=B_\vt$, $f[A_\vb]=B_\vb$, and $f[A_\vn]=B_\vn.$ Thus, $A_\vt\cong B_\vt$, $A_\vb\cong B_\vb$, and $A_\vn\cong B_\vn$.

Now assume that $A_\vt\cong B_\vt$, $A_\vb\cong B_\vb$, and $A_\vn\cong B_\vn$. Then there are injections $g$, $h$, and $j$ such that $g[A_\vt]=B_\vt$, $h[A_\vb]=B_\vb$, and $j[A_\vn]=B_\vn$. We let
$$ f(x) = 
  \begin{cases}
    g(x), & \text{for } x\in A_\vt \\
    h(x), & \text{for } x\in A_\vb \\
    j(x), & \text{for }x\in A_\vn
  \end{cases}$$
and get $f[A]=B.$

\medskip

\noindent\textbf{4)} We will show that
$$\neg(A\ncong B)\text{ iff }A^!\preceq B^?\text{ and } B^!\preceq A^?\text{ and } A_\vb \preceq B_\vn\text{ and }B_\vb\preceq A_\vn.$$
First, we note that
$$\neg(A\ncong B)\text{ iff } \exists f\in\mathrm{Count}(A)(f[A^!]\subseteq B^?\land B^!\subseteq f[A^?]).$$
And, by symmetry, we also have
$$\neg(A\ncong B)\text{ iff } \exists g\in \mathrm{Count}(B)(g[B^!]\subseteq A^?\land A^!\subseteq g[B^?]).$$

Assume that $\neg(A\ncong B)$. Then, there is a counting $f$ of $A$ such that $f[A^!]\subseteq B^?$ and $B^!\subseteq f[A^?]$. Clearly, this gives $A^!\preceq B^?\text{ and } B^!\preceq A^?$. Next, we note that $B^!\subseteq f[A^?]$ and $f[A_\vb]\cap f[A^?]=\emptyset$, so $f[A_\vb]\cap B^!=\emptyset$. Since $f[A_\vb]\subseteq B^?$, we get $f[A_\vb]\subseteq B_\vn$. Thus, $A_\vb \preceq B_\vn$, and a similar argument gives $B_\vb\preceq A_\vn$.

\smallskip

Now let us assume that $A^!\preceq B^?$, $B^!\preceq A^?$, $A_\vb \preceq B_\vn,\text{ and }B_\vb\preceq A_\vn.$ We aim to find a counting $f$ of $A$ such that $f[A^!]\subseteq B^?$ and $B^!\subseteq f[A^?]$ or a counting $g$ of $B$ such that $g[B^!]\subseteq A^?$ and $A^!\subseteq g[B^?]$. We can w.l.o.g. assume that $\rlm(A)\cap \rlm(B)=\emptyset$. Using the axiom of choice, together with the fact that $A_\vt$ and $B_\vt$ are classical sets, we have three cases to consider: $A_\vt \cong B_\vt$, $A_\vt\preceq B_\vt$ and $A_\vt \not\cong B_\vt$, or $B_\vt\preceq A_\vt$ and $A_\vt \not\cong B_\vt$. 

Case 1: If $A_\vt\cong B_\vt$, then there is a bijection $f_1:A_\vt\rightarrow B_\vt$, and injections $f_2:A_\vb\rightarrow B_\vn$ and $g:B_\vb\rightarrow A_\vn$. We define our desired counting $f$ of $A$ by letting
$$ f(x) = 
  \begin{cases}
    f_1(x), & \text{for } x\in A_\vt \\
    f_2(x), & \text{for } x\in A_\vb \\
    g^{-1}(x), & \text{for }x\in g[B_\vb]\\
    x, & \text{for }x\in  A_\vn\setminus g[B_\vb].
  \end{cases}$$
It is then straightforward to verify that $f[A^!]\subseteq B^?$ and $B^!\subseteq f[A^?]$.

Case 2: If $A_\vt\preceq B_\vt$ and $A_\vt \not\cong B_\vt$, then there are a pair of injections $f_1:A_\vt\rightarrow B_\vt$ and $f_2: A_\vb\rightarrow B_\vn$. Moreover, since $B_\vt\cup B_\vb\preceq A_\vt\cup A_\vn$, we can do an exercise in classical set theory and find that $ (B_\vt\setminus f_1[A_\vt])\cup B_\vb \preceq A_\vn$. We let $g$ be the corresponding injection. We can now define our counting of $A$ by letting
$$ f(x) = 
  \begin{cases}
    f_1(x), & \text{for } x\in A_\vt \\
    f_2(x), & \text{for } x\in A_\vb \\
    g^{-1}(x), & \text{for }x\in g[B^!]\\
    x, & \text{for }x\in  A_\vn\setminus g[B^!].
  \end{cases}$$
Again, it is then straightforward to verify that $f[A^!]\subseteq B^?$ and $B^!\subseteq f[A^?]$.

Case 3: This is identical to case 2.

\end{proof}

\begin{definition}
    We call a set \emph{finite} if its realm is finite.
\end{definition}

For finite sets, we have a version of the
Schröder–Bernstein theorem.
\begin{proposition}\label{FinShrBern}
Let $A$ and $B$ be finite sets. If $A\preceq B$ and $B\preceq A$, then $A\cong B.$
\end{proposition}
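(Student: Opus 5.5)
The plan is to reduce everything to classical natural-number arithmetic via Theorem~\ref{short}. Since $A$ and $B$ are finite, the classical sets $A_\vt,A_\vb,A_\vn,B_\vt,B_\vb,B_\vn$ are finite, so we may write $a_\vt,a_\vb,a_\vn$ and $b_\vt,b_\vb,b_\vn$ for their sizes, as in Case~2 of the proof of Theorem~\ref{short}. By Theorem~\ref{short}(3), to prove $A\cong B$ it suffices to show that $A_\vt\cong B_\vt$, $A_\vb\cong B_\vb$ and $A_\vn\cong B_\vn$. For finite classical sets this is just $a_\vt=b_\vt$, $a_\vb=b_\vb$ and $a_\vn=b_\vn$.

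First I unpack the hypotheses using Theorem~\ref{short}(1). Since $A^!=A_\vb\cup A_\vt$, $A^?=A_\vt\cup A_\vn$ and $\rlm(A)=A_\vb\cup A_\vt\cup A_\vn$ are disjoint unions, $A\preceq B$ yields
\begin{align*}
a_\vt+a_\vb+a_\vn &\leq b_\vt+b_\vb+b_\vn,\\
a_\vt+a_\vb &\leq b_\vt+b_\vb,\\
a_\vt+a_\vn &\leq b_\vt+b_\vn,\\
a_\vt &\leq b_\vt.
\end{align*}
Symmetrically, $B\preceq A$ yields the same four inequalities with the roles of $a$ and $b$ exchanged. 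Here I use that $\preceq$ between finite classical sets agrees with $\leq$ on their sizes, by the remark that $\preceq$ is classical on classical sets.

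Combining the two families gives four equalities:
\begin{align*}
a_\vt &= b_\vt,\\
a_\vt+a_\vb &= b_\vt+b_\vb,\\
a_\vt+a_\vn &= b_\vt+b_\vn,\\
a_\vt+a_\vb+a_\vn &= b_\vt+b_\vb+b_\vn.
\end{align*}
Because these are natural numbers, we can subtract. The first and second equalities give $a_\vb=b_\vb$, and the first and third give $a_\vn=b_\vn$. The fourth equality is not even needed. Hence $A_\vt\cong B_\vt$, $A_\vb\cong B_\vb$ and $A_\vn\cong B_\vn$, and Theorem~\ref{short}(3) gives $A\cong B$.

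There is no real obstacle. The one point needing care is that the cancellation step depends on finiteness. Proposition~\ref{FailureOfSchrBern} shows the argument must fail for infinite realms, because there cardinal addition is not cancellative. For example, $\aleph_0+1=\aleph_0+0$ does not give $1=0$. That is exactly where the finiteness hypothesis enters.
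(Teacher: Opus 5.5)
Your proof is correct and follows essentially the same route as the paper. Like the paper, you turn $A\preceq B$ and $B\preceq A$ into the four pairs of inequalities on $a_\vt,a_\vb,a_\vn,b_\vt,b_\vb,b_\vn$, merge them into equalities, and cancel in the natural numbers using exactly the paper's three equalities; you also make explicit the appeals to Theorem~\ref{short}(1) and (3) that the paper leaves implicit.
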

\begin{proof}
    The sets $A_\vt$, $A_\vb$, $A_\vn$, $B_\vt$, $B_\vb$, and $B_\vn$ are finite and have classical natural numbers associated with them: $a_\vt$, $a_\vb$, $a_\vn$, $b_\vt$, $b_\vb$, and $b_\vn$, respectively. If $A\preceq B$ and $B\preceq A$, then we get the following equalities:
\begin{align*}
    a_\vt+a_\vb+a_\vn &= b_\vt+ b_\vb+b_\vn\\
    a_\vt+a_\vb &= b_\vt+ b_\vb\\
    a_\vt+a_\vn &= b_\vt+ b_\vn\\
    a_\vt&= b_\vt
\end{align*}
The last three give $a_\vt=b_\vt$, $a_\vb=b_\vb$, and $a_\vn=b_\vn$. Thus, $A\cong B$.

\end{proof}

\section{The cardinal numbers in $\BZFC$}\label{CardNumSect}
In this section, we give our formal definition of a cardinal number, establish some basic results about them, and give a treatment of cardinal arithmetic.


\subsection{Definition and basic properties}\label{FormalNum}
Our aim is to give an encoding of the cardinal numbers such that the following holds: 
\begin{equation}\label{CardEq}
|A|=|B|\Leftrightarrow A\cong B.
\end{equation}

A first guess would be to try to pick canonical representatives from each equivalence class, as is usually done in classical $\ZFC$. To see why this will not work in our setting, consider the set $A=\langle\{a\}|\{b\}|\{c\}\rangle$. We have $A\neq A$ since $A_\vb$ is non-empty. However, Theorem \ref{short} tells us that $\neg (A\not\cong A)$. Similarly, for any $X$ with $X\cong A$, we have $X\neq X$ and $\neg(X\not \cong X)$. Thus, no representative of the equivalence class of $A$ can serve as the cardinal number of $A$. Instead, we land on the following definition.

\begin{definition}\label{DefCardNum}
Given a set $A$, we define the \textit{cardinal number $|A|$ of $A$} as the triple $(X,Y,Z)$ where\footnote{Recall that for a classical set $B$, $|B|_{Cl}$ is the usual least von Neumann ordinal in one-to-one correspondence with $B$.}
$$X:=\boldsymbol{\langle} |A^!|_{Cl} \boldsymbol{\mid} |A^?|_{Cl}\boldsymbol{\rangle},\quad 
Y:=\boldsymbol{\langle} |A_\vt|_{Cl}\boldsymbol{\mid}|A^?|_{Cl}\boldsymbol{\rangle},\quad\text{and}\quad
Z:=\boldsymbol{\langle} |A_\vb|_{Cl}\boldsymbol{\mid}|A_\vn|_{Cl}\boldsymbol{\rangle}.$$
By a \textit{cardinal number} we mean an element from the class $\{|x|:x\in V\}.$ 
\end{definition}

\begin{theorem}\label{EqCardThrm}
For all $A$ and $B$,
$$|A|=|B|\Leftrightarrow A\cong B.$$
\end{theorem}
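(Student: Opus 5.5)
The plan is to verify the truth condition and the falsity condition of the strong equivalence separately. In each case we reduce $|A|=|B|$ to statements about classical cardinals, and then compare with Theorem~\ref{short}(3) and (4). Two preliminary facts are needed. First, by the defining property of ordered pairs (applied twice for triples),
$$(X,Y,Z)=(X',Y',Z')\Leftrightarrow (X=X'\land Y=Y'\land Z=Z').$$
So $|A|=|B|$ is true iff all three components are equal, and false iff some component is unequal. Second, for classical $P,Q,P',Q'$, the equivalences listed after Figure~\ref{Balls!?} give:
\begin{itemize}
\item $\langle P|Q\rangle=\langle P'|Q'\rangle$ iff $P=P'$ and $Q=Q'$;
\item $\langle P|Q\rangle\neq\langle P'|Q'\rangle$ iff $P\not\subseteq Q'$ or $P'\not\subseteq Q$.
\end{itemize}
The sets $|C|_{Cl}$ are initial von Neumann ordinals, hence hereditarily classical. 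For them, $|C|_{Cl}=|D|_{Cl}$ iff $C\cong D$, and $|C|_{Cl}\subseteq|D|_{Cl}$ iff $C\preceq D$, both in the classical sense.

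\emph{Truth.} By the above, $|A|=|B|$ is true iff the following cardinal equalities hold:
\begin{itemize}
\item $|A^!|_{Cl}=|B^!|_{Cl}$ and $|A^?|_{Cl}=|B^?|_{Cl}$;
\item $|A_\vt|_{Cl}=|B_\vt|_{Cl}$;
\item $|A_\vb|_{Cl}=|B_\vb|_{Cl}$ and $|A_\vn|_{Cl}=|B_\vn|_{Cl}$.
\end{itemize}
These imply $A_\vt\cong B_\vt$, $A_\vb\cong B_\vb$ and $A_\vn\cong B_\vn$, hence $A\cong B$ by Theorem~\ref{short}(3). Conversely, from those three classical equinumerosities we obtain the remaining two. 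Indeed, $A^!=A_\vb\cup A_\vt$ and $A^?=A_\vt\cup A_\vn$ are disjoint unions, so gluing the bijections gives $A^!\cong B^!$ and $A^?\cong B^?$. This yields all five equalities above.

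\emph{Falsity.} $|A|\neq|B|$ holds iff one of the three components is unequal. Translating each component via the second preliminary fact gives three cases:
\begin{itemize}
\item $X\neq X'$ iff $A^!\npreceq B^?$ or $B^!\npreceq A^?$;
\item $Y\neq Y'$ iff $A_\vt\npreceq B^?$ or $B_\vt\npreceq A^?$;
\item $Z\neq Z'$ iff $A_\vb\npreceq B_\vn$ or $B_\vb\npreceq A_\vn$.
\end{itemize}
Theorem~\ref{short}(4) characterizes $A\ncong B$ by exactly the disjunction of the $X$ and $Z$ clauses. So it remains to see that the $Y$ clause adds nothing. Since $A_\vt\subseteq A^!$, $A_\vt\npreceq B^?$ implies $A^!\npreceq B^?$, and symmetrically for $B$. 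Hence $|A|\neq|B|$ iff $A\ncong B$.

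The main point requiring care is the falsity direction. One has to unwind $\neq$ on triples and on the bracket sets correctly, using the $!$/$?$-extension characterization rather than naive componentwise reasoning. One must also check that, for initial ordinals, $\not\subseteq$ coincides with the classical $\npreceq$. Once that is in place, the redundancy of the $Y$-component is the only non-mechanical observation.
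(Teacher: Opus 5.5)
Your proposal is correct and follows the paper's proof essentially step for step. The truth condition is reduced componentwise to the three classical equinumerosities and Theorem~\ref{short}(3), and the falsity condition is unwound through $A\neq B\leftrightarrow A^!\not\subseteq B^?\lor B^!\not\subseteq A^?$ on ordinals and matched against Theorem~\ref{short}(4), with the middle component shown redundant via $A_\vt\subseteq A^!$. The only addition is that you spell out the bijection-gluing step that the paper leaves implicit when collapsing five equalities to three.
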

\begin{proof}
Let $|A|=(X_1,X_2,X_3)$ and $|B|=(Y_1,Y_2,Y_3)$. We have
\begin{align*}
|A|=|B|&\Leftrightarrow X_1=Y_1\land X_2=Y_2\land X_3=Y_3\\
&\leftrightarrow (|A^!|_{Cl}=|B^!|_{Cl}\land |A^?|_{Cl}=|B^?|_{Cl})\\
& \ \ \ \land (|A_\vt|_{Cl}=|B_\vt|_{Cl}\land |A^?|_{Cl}=|B^?|_{Cl})\\
& \ \ \ \land (|A_\vb|_{Cl}=|B_\vb|_{Cl}\land |A_\vn|_{Cl}=|B_\vn|_{Cl})\\
&\leftrightarrow |A_\vt|_{Cl}=|B_\vt|_{Cl}\land |A_\vb|_{Cl}=|B_\vb|_{Cl}\land |A_\vn|_{Cl}=|B_\vn|_{Cl}\\
&\leftrightarrow A_\vt\cong B_\vt\land A_\vb\cong B_\vb\land A_\vn\cong B_\vn\\
&\leftrightarrow A\cong B.
\end{align*}

For the remainder of the proof, it is crucial to remember that for classical sets $X$ and $Y,$ $|X|_{Cl}\leq |Y|_{Cl}$ iff $|X|_{Cl}\subseteq |Y|_{Cl}$. So, $|X|_{Cl}\nleq |Y|_{Cl}$ iff $\exists \alpha (\alpha \in |X|_{Cl}\land \alpha\notin|Y|_{Cl}).$

\smallskip

Now, assume that $A\ncong B$. Then one of the following holds: $|A^!|_{Cl}\nleq |B^?|_{Cl}$,  $|B^!|_{Cl}\nleq |A^?|_{Cl}$, $|A_\vb|_{Cl}\nleq |B_\vn|_{Cl}$, or $|B_\vb|_{Cl}\nleq |A_\vn|_{Cl}$. 

If $|A^!|_{Cl}\nleq |B^?|_{Cl}$ or $|B^!|_{Cl}\nleq |A^?|_{Cl}$, then $X_1\neq Y_1$, and therefore $|A|\neq|B|$. 

If $|A_\vb|_{Cl}\nleq |B_\vn|_{Cl}$ or $|B_\vb|_{Cl}\nleq |A_\vn|_{Cl}$, then $X_3\neq Y_3$, and therefore $|A|\neq |B|$. 

\smallskip

Finally, suppose that $|A|\neq |B|$, then $X_1\neq Y_1$ or $X_2\neq Y_2$ or $X_3\neq Y_3$. 

If $X_1\neq Y_1$, then $\exists \alpha (\alpha\in |A^!|_{Cl}\land \alpha \notin |B^?|_{Cl})$ or $\exists \alpha (\alpha\in |B^!|_{Cl}\land \alpha \notin |A^?|_{Cl})$. In either case, $A^!\npreceq B^?\text{ or } B^!\npreceq A^?$, and therefore $A\ncong B$.

If $X_2\neq Y_2$, then $\exists \alpha (\alpha\in |A_\vt|_{Cl}\land \alpha \notin |B^?|_{Cl})$ or $\exists \alpha (\alpha\in |B_\vt|_{Cl}\land \alpha \notin |A^?|_{Cl})$, and therefore $\exists \alpha (\alpha\in |A^!|_{Cl}\land \alpha \notin |B^?|_{Cl})$ or $\exists \alpha (\alpha\in |B^!|_{Cl}\land \alpha \notin |A^?|_{Cl})$. Thus, $A\ncong B$.

If $X_3\neq Y_3$, then $\exists \alpha (\alpha\in |A_\vb|_{Cl}\land \alpha \notin |B_\vn|_{Cl})$ or $\exists \alpha (\alpha\in |B_\vb|_{Cl}\land \alpha \notin |A_\vn|_{Cl}),$ so $A_\vb \npreceq B_\vn\text{ or }B_\vb\npreceq A_\vn$, and therefore $A\ncong B$.

\end{proof}

Theorem \ref{long} allows us to make the following definition.

\begin{definition}
We order the cardinal numbers by letting 
$$|A|\leq |B|:\Leftrightarrow A\preceq B.$$
\end{definition}

\medskip

We easily get the following theorems.

\begin{theorem}\label{longcard}
For all $A$, $B$ and $C$,
\begin{enumerate}
\item $|A|=|A|$;
\item $|A|=|B|\Leftrightarrow |B|=|A|$;
\item if $|A|=|B|$, then $|A|=|C|\Leftrightarrow |B|=|C|$;
\item if $|A|=|B|$, then $|A|\leq |C|\Leftrightarrow |B|\leq |C|$;
\item if $|A|=|B|$, then $|C|\leq |A|\Leftrightarrow |C|\leq |B|$;
\item if $|A|\leq |B|$ and $|B|\leq |C|$, then $|A|\leq |C|$.
\end{enumerate}
\end{theorem}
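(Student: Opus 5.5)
Theorem~\ref{longcard} is Theorem~\ref{long} restated through the two translations we already have. Theorem~\ref{EqCardThrm} gives $|A|=|B|\Leftrightarrow A\cong B$, and the definition of the order gives $|A|\leq|B|\Leftrightarrow A\preceq B$. So the plan is to replace each cardinal statement by the corresponding statement about $\cong$ or $\preceq$ and then invoke the matching item of Theorem~\ref{long}.

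The only point needing care is that the connectives in play are $\Leftrightarrow$ and the four-valued $\rightarrow$. Strong equivalence $\Leftrightarrow$ preserves both truth and falsity. Hence any formula built from these atoms with $\land,\lor,\no,\rightarrow,\leftrightarrow,\Leftrightarrow$ keeps its $\vDash^+$ and $\vDash^-$ status when a subformula is replaced by a strongly equivalent one. This substitution property follows by a routine induction on formula complexity, using the clauses of $\vDash^+$ and $\vDash^-$ in the semantics. I would note it once and then apply it uniformly.

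With this in hand, the six items go as follows.
\begin{enumerate}
\item Item 1 is immediate: item 1 of Theorem~\ref{long} gives $A\cong A$, so $|A|=|A|$ is true by Theorem~\ref{EqCardThrm}.
\item For item 2, rewrite both sides of $|A|=|B|\Leftrightarrow|B|=|A|$ via Theorem~\ref{EqCardThrm}. Strong equivalences compose, so this reduces to item 2 of Theorem~\ref{long}.
\item For items 3, 4 and 5, assume $|A|=|B|$ is true. Then $A\cong B$ is true, since $\Leftrightarrow$ transmits truth. The consequent strong equivalence, e.g.\ $|A|\leq|C|\Leftrightarrow|B|\leq|C|$, is obtained by substituting into the consequent of the corresponding item of Theorem~\ref{long}.
\item For item 6, the two true hypotheses $|A|\leq|B|$ and $|B|\leq|C|$ yield $A\preceq B$ and $B\preceq C$. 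Item 6 of Theorem~\ref{long} then gives $A\preceq C$, which translates back to $|A|\leq|C|$.
\end{enumerate}

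The only potential obstacle is making sure the hypotheses of the conditionals are transmitted with the right truth behaviour. Here, though, only truth of the antecedent is used, and the conclusions are strong equivalences or plain truths. So it is enough to observe that $\varphi\Leftrightarrow\psi$ implies $\varphi\leftrightarrow\psi$, and that strong equivalence is transitive and symmetric. Both facts follow directly from the truth tables.
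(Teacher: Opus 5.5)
Your proposal is correct and follows the paper's route. The paper just says the result follows immediately from Theorem~\ref{long}; you make that explicit by translating each item through Theorem~\ref{EqCardThrm} and the definition $|A|\leq|B|:\Leftrightarrow A\preceq B$, using the fact that strongly equivalent formulas have the same truth value and can therefore be substituted into any truth-functional context.
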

\begin{proof}
Follows immediately from Theorem \ref{long}.
\end{proof}

\begin{theorem}\label{shortcard}
    For all $A$ and $B$,
{\small
\begin{enumerate}\setlength{\itemindent}{-1em}
    \item $|A| \leq |B|\;\text{ iff }\; {|\rlm(A)|\leq|\rlm(B)|\text{ and } |A^!|\leq |B^!|\text{ and }|A^?|\leq |B^?|\text{ and }|A_\vt| \leq |B_\vt|}$;
    \item $|A| \not\leq |B|\;\text{ iff }\; |A^!|\not\leq |B^?|$;
    \vspace{0.5em}
    \item $|A| =|B|\;\text{ iff }\; |A_\vt|=|B_\vt|\text{ and } |A_\vb|=|B_\vb|\text{ and }|A_\vn|=|B_\vn|$;
    \item $|A|\neq|B|\;\text{ iff }\; |A^!|\nleq |B^?|\text{ or } |B^!|\nleq  |A^?|\text{ or } |A_\vb| \nleq |B_\vn|\text{ or }|B_\vb|\nleq |A_\vn|$.
\end{enumerate}
}
\end{theorem}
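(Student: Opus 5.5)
The plan is to transfer Theorem~\ref{short} verbatim through the definition of the ordering and Theorem~\ref{EqCardThrm}. By definition, $|A|\leq|B|$ is strongly equivalent to $A\preceq B$. Since strong equivalence $\Leftrightarrow$ preserves both truth and falsity, we get
\[
|A|\leq|B| \text{ iff } A\preceq B, \qquad |A|\not\leq|B| \text{ iff } A\not\preceq B.
\]
Likewise, Theorem~\ref{EqCardThrm} gives
\[
|A|=|B| \text{ iff } A\cong B, \qquad |A|\neq|B| \text{ iff } A\ncong B.
\]
These equivalences hold for arbitrary sets. In particular they apply to the classical sets $\rlm(A)$, $A^!$, $A^?$, $A_\vt$, $A_\vb$, $A_\vn$ and their counterparts for $B$.

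For item 1, I will start from Theorem~\ref{short}(1). I will then replace each of its conjuncts: $A\preceq B$ by $|A|\leq|B|$, $\rlm(A)\preceq\rlm(B)$ by $|\rlm(A)|\leq|\rlm(B)|$, and so on. Each replacement is licensed by the truth-clause of the definitional equivalence. Items 2, 3 and 4 go the same way from the corresponding items of Theorem~\ref{short}. Item 2 uses the falsity-clause $|X|\not\leq|Y|$ iff $X\not\preceq Y$. Item 3 uses the truth-clause of Theorem~\ref{EqCardThrm}. Item 4 uses the falsity-clause of Theorem~\ref{EqCardThrm} on the left-hand side and the falsity-clause of the ordering definition for the four disjuncts $|A^!|\nleq|B^?|$, $|B^!|\nleq|A^?|$, $|A_\vb|\nleq|B_\vn|$, $|B_\vb|\nleq|A_\vn|$.

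The only point needing care is that the ``iff'' in Theorem~\ref{short} is metatheoretic (or a plain $\leftrightarrow$ between truths). So I must check that I am substituting statements with the same \emph{truth} condition in items 1 and 3, and the same \emph{falsity} condition in items 2 and 4. This is exactly what $\Leftrightarrow$ guarantees, so no obstacle arises.

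There is one subtle point. The definition $|A|\leq|B|:\Leftrightarrow A\preceq B$ must be well-defined on cardinals rather than on sets. That is the content of Theorem~\ref{long}(4),(5), as noted before the definition, and I will simply cite it. I expect no genuine difficulty: the whole proof is a routine rewriting, and I would present it as ``immediate from Theorem~\ref{short}, Theorem~\ref{EqCardThrm} and the definition of $\leq$.''
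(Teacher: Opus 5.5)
Your proposal is correct and takes the same route as the paper, whose proof is just ``Follows from Theorem~\ref{short}.'' Your substitution argument spells out the bookkeeping that one-liner leaves implicit: the truth clauses of $|X|\leq|Y|:\Leftrightarrow X\preceq Y$ and of Theorem~\ref{EqCardThrm} handle items 1 and 3, and the falsity clauses handle items 2 and 4.
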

\begin{proof}
    Follows from Theorem \ref{short}.
\end{proof}

\subsection{Cardinal arithmetic}
The cardinal numbers come with a rich arithmetic. 

\begin{definition}
Let $A$ and $B$ be sets. We let
$$|A|+|B|:=|A\uplus B|\text{ and }|A|\cdot|B|:=|A\times B|.$$
\end{definition}
\medskip

We get the usual rules for associativity, commutativity, and distributivity.
\begin{proposition}
    For all cardinal numbers $\kappa$, $\mu$, and $\lambda$,

    \begin{enumerate}
        \item $\kappa+(\mu+\lambda)=(\kappa+\mu)+\lambda$;
        \item $\kappa(\mu\lambda)=(\kappa\mu)\lambda$;
        \item $\kappa+\mu=\mu+\kappa$;
        \item $\kappa\mu=\mu\kappa$;
        \item $\kappa(\mu+\lambda)=\kappa\mu+\kappa\lambda$.
    \end{enumerate}
\end{proposition}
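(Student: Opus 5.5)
The plan is to reduce each identity to an equinumerosity statement about concrete representing sets, and then discharge that statement with Theorem~\ref{short}(3). The key point is that the equations here are strong equivalences in $\BZFC$. By Theorem~\ref{EqCardThrm}, $|A|=|B|$ is true iff $A\cong B$. However, $|A|=|B|$ may also be false, so we cannot simply prove truth and stop. The statement to be shown is presumably that these identities \emph{hold}, i.e.\ are true. So write $\kappa=|A|$, $\mu=|B|$, $\lambda=|C|$. For each identity it suffices to exhibit a counting $f$ of the left-hand representing set whose image equals the right-hand representing set. Alternatively, by Theorem~\ref{short}(3), it suffices to check that the $\vt$-, $\vb$- and $\vn$-parts of the two sets are classically equinumerous.

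The first step is to compute the three parts of $A\uplus B$ and $A\times B$ in terms of the parts of $A$ and $B$. For the disjoint union, the membership condition $(a,0)\in A\uplus B\Leftrightarrow a\in A$ gives
\[(A\uplus B)_\vt=A_\vt\uplus B_\vt,\qquad (A\uplus B)_\vb=A_\vb\uplus B_\vb,\qquad (A\uplus B)_\vn=A_\vn\uplus B_\vn.\]
For the product, the condition $(a,b)\in A\times B\Leftrightarrow a\in A\land b\in B$ is read off from the truth table of $\land$. This yields
\begin{align*}
(A\times B)_\vt&=A_\vt\times B_\vt,\\
(A\times B)_\vb&=(A_\vb\times B_\vb)\cup(A_\vb\times B_\vt)\cup(A_\vt\times B_\vb),\\
(A\times B)_\vn&=(A_\vn\times B_\vn)\cup(A_\vn\times B_\vt)\cup(A_\vt\times B_\vn).
\end{align*}
The pairs with one coordinate in a $\vb$-part and the other in an $\vn$-part are false ($\vb\land\vn=\vf$), so they drop out of the realm.

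With these formulas, each identity becomes a statement about classical sets, and the classical canonical maps do the work. For associativity and commutativity of $+$, use the canonical rearrangement $A\uplus(B\uplus C)\to(A\uplus B)\uplus C$ and the swap $(x,0)\mapsto(x,1)$, $(x,1)\mapsto(x,0)$. These are injections on the realm that preserve each truth value pointwise, so they map the set onto the target, giving $\cong$ directly. For commutativity and associativity of $\cdot$, use $(a,b)\mapsto(b,a)$ and $((a,b),c)\mapsto(a,(b,c))$. These preserve membership truth values because $\land$ is commutative and associative in $\BS$, as the table shows.

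Distributivity is the main obstacle. The map $(a,(b,i))\mapsto((a,b),i)$ sends $A\times(B\uplus C)$ to $(A\times B)\uplus(A\times C)$. One must check that the truth value of $a\in A\land(b,i)\in B\uplus C$ equals the truth value of $(a,b)\in A\times B$, resp.\ $(a,c)\in A\times C$. This holds by the membership clauses for $\uplus$ and $\times$, which are strong equivalences. A subtle point is that the map must be injective on the realm of the left set, and the realm of the image must coincide with that of the target. Pairs evaluating to $\vf$, such as a $\vb$-part element paired with an $\vn$-part element, lie outside both realms. The counting may therefore be taken on the realm only, where it is a bijection onto the target realm preserving truth values, so $f[\text{LHS}]=\text{RHS}$. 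Finally, Theorem~\ref{EqCardThrm} converts each $\cong$ into the corresponding equality of cardinals.
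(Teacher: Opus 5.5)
Your proof is correct. The paper states this proposition without proof, and your argument is exactly the routine verification it leaves implicit: you read off the $\vt$-, $\vb$- and $\vn$-parts of $A\uplus B$ and $A\times B$ from the truth tables, then give canonical maps that biject the realms while preserving membership truth values. This is the same device the paper uses to prove $|A|=|A_\vt|+|A_\vb|\cdot\bb+|A_\vn|\cdot\nn$.

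Two small remarks:
\begin{itemize}
\item Your hedging about falsity is unnecessary. Asserting an identity means asserting its truth, and since $=^+$ is genuine identity, this already makes the two triples the same object. Non-falsity could not be claimed anyway; for example, $\bb=\bb$ is also false.
\item Your part formulas also show that $|A\uplus B|$ and $|A\times B|$ do not depend on the choice of representatives. The paper's definition of $+$ and $\cdot$ tacitly presupposes this, and you use it when you write $\kappa=|A|$, $\mu=|B|$, $\lambda=|C|$.
\end{itemize}
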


\begin{definition}
We introduce the following cardinal numbers:
$$0:=|\emptyset|,\;1:=|\{\emptyset\}|,
\;\bb:=|\langle \{\emptyset\}\mid \emptyset\mid\emptyset\rangle|, \:\text{and}\:\nn:=|\langle \emptyset\mid \emptyset\mid\{\emptyset\}\rangle|.$$
\end{definition}

\begin{figure}[H]
    \centering
\begin{tikzpicture}[line cap=round,line join=round,scale=0.3]
\clip (-3.8,-2.3) rectangle (3.8,2.3);
\draw [line width=0.6pt] (-1,0) circle (2);
\draw [line width=0.6pt] (1,0) circle (2);
\end{tikzpicture}
\begin{tikzpicture}[line cap=round,line join=round,scale=0.3]
\clip (-3.8,-2.3) rectangle (3.8,2.3);
\draw [line width=0.6pt] (-1,0) circle (2);
\draw [line width=0.6pt] (1,0) circle (2);
\node[font=\footnotesize] at (0,0) {$\bullet$};
\end{tikzpicture}
\begin{tikzpicture}[line cap=round,line join=round,scale=0.3]
\clip (-3.8,-2.3) rectangle (3.8,2.3);
\draw [line width=0.6pt] (-1,0) circle (2);
\draw [line width=0.6pt] (1,0) circle (2);
\node[font=\footnotesize] at (-1.75,0) {$\bullet$};
\end{tikzpicture}
\begin{tikzpicture}[line cap=round,line join=round,scale=0.3]
\clip (-3.8,-2.3) rectangle (3.8,2.3);
\draw [line width=0.6pt] (-1,0) circle (2);
\draw [line width=0.6pt] (1,0) circle (2);
\node[font=\footnotesize] at (1.75,0) {$\bullet$};
\end{tikzpicture}
\caption{The cardinal numbers $0$, $1$, $\bb$, and $\nn$, respectively.}
\label{fig:constants}
\end{figure}

\medskip

The following theorem tells us that every cardinal number can be expressed as a linear combination of the numbers 1, $\bb$, and $\nn$ with classical cardinals as coefficients.
\begin{theorem}
For all $A$,
$$|A|=|A_\vt|+|A_\vb|\cdot \bb+|A_\vn|\cdot \nn.$$
\end{theorem}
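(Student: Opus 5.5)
We will prove this by exhibiting a concrete set whose cardinality is, by definition, the right-hand side, and then applying Theorem~\ref{EqCardThrm} (equivalently, item 3 of Theorem~\ref{short}), which reduces $|A|=|C|$ to checking that the three parts of $A$ and $C$ are classically equinumerous.

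First we unwind the right-hand side. With $B:=\langle\{\emptyset\}\mid\emptyset\mid\emptyset\rangle$ and $N:=\langle\emptyset\mid\emptyset\mid\{\emptyset\}\rangle$, we have
$$|A_\vt|+|A_\vb|\cdot\bb+|A_\vn|\cdot\nn=\big|A_\vt\uplus\big((A_\vb\times B)\uplus(A_\vn\times N)\big)\big|,$$
using the definitions of $+$ and $\cdot$ together with associativity. The subtlety is that $|A_\vb|\cdot\bb$ is defined as $|A'\times B'|$ for \emph{any} representatives of these cardinals. So we must check that the operations are well defined: if $|A|=|A'|$ and $|B|=|B'|$, then $|A\uplus B|=|A'\uplus B'|$ and $|A\times B|=|A'\times B'|$. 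Via Theorem~\ref{short}(3), this reduces to the part computations below, since those computations show that the $\vt,\vb,\vn$ parts of $A\uplus B$ and $A\times B$ depend only on the classical sizes of the parts of $A$ and $B$. With this in hand, we may take $A_\vb$ and $A_\vn$ themselves (classical sets) as representatives.

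Next we compute the parts of products and disjoint unions. For a classical set $X$ and any set $D$, the membership clause $(x,y)\in X\times D\Leftrightarrow x\in X\land y\in D$ shows that $(x,y)$ has the truth value of $y\in D$ whenever $x\in X$, and is $\vf$ otherwise. Hence
$$(X\times D)_\vt=X\times D_\vt,\qquad (X\times D)_\vb=X\times D_\vb,\qquad (X\times D)_\vn=X\times D_\vn.$$
In particular, $A_\vb\times B=\langle A_\vb\times\{\emptyset\}\mid\emptyset\mid\emptyset\rangle$ and $A_\vn\times N=\langle\emptyset\mid\emptyset\mid A_\vn\times\{\emptyset\}\rangle$. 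Similarly, the clauses for $\uplus$ give $(D\uplus E)_\vx=D_\vx\uplus E_\vx$ for $\vx\in\{\vt,\vb,\vn\}$. The one point needing care is the $\vn$ case of the union defining $\uplus$: the two summands have disjoint realms (tags $0$ and $1$), so an element $(x,0)$ is simply $\vf$ in the second summand, and the disjunction table then returns the value from the first summand. This is the step where the four-valued tables must actually be checked, and it is the main potential obstacle, though a routine one.

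Finally, let $C:=A_\vt\uplus\big((A_\vb\times B)\uplus(A_\vn\times N)\big)$. From the part computations,
$$C_\vt\cong A_\vt,\qquad C_\vb\cong A_\vb\times\{\emptyset\}\cong A_\vb,\qquad C_\vn\cong A_\vn\times\{\emptyset\}\cong A_\vn$$
as classical sets, with the empty contributions from the other summands dropping out. By Theorem~\ref{short}(3), $A\cong C$ is true. Theorem~\ref{EqCardThrm} then yields $|A|=|C|$, which is the claimed equation.
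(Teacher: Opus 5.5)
Your proof is correct and takes essentially the same route as the paper: you identify the right-hand side as $|C|$ for an explicit tagged set, show $A\cong C$, and conclude with Theorem~\ref{EqCardThrm}. The only difference is that the paper writes down the counting $f(x)=(x,i)$ directly, with the tag $i$ chosen by whether $x\in A_\vt$, $A_\vb$ or $A_\vn$, whereas you obtain $A\cong C$ from Theorem~\ref{short}(3), whose proof builds exactly that kind of part-wise counting. Your explicit computation of the parts of $X\times D$ and $D\uplus E$ also spells out a step the paper passes over.
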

\begin{proof}
We have
\begin{align*}
    |A_\vt|+|A_\vb|&\cdot \bb+|A_\vn|\cdot \nn=|A_\vt \uplus (A_\vb\times \langle \{\emptyset\}\mid\emptyset\rangle)  \uplus (A_\vn\times \langle \emptyset\mid\{\emptyset\}\rangle ) |\\[8pt]
    &=|\big\langle\{(x,0):x\in A_\vt\}\mid\{(x,1):x\in A_\vb\}\mid \{(x,2):x\in A_\vn\}\big\rangle|
\end{align*}

For $x\in \rlm (A)$, let
$$ f(x) = 
  \begin{cases}
    (x,0), & \text{for } x\in A_\vt \\
    (x,1), & \text{for } x\in A_\vb \\
    (x,2), & \text{for }x\in A_\vn
  \end{cases}$$
Now,
$$f[A]=\big\langle\{(x,0):x\in A_\vt\}\mid\{(x,1):x\in A_\vb\}\mid \{(x,2):x\in A_\vn\}\big\rangle.$$
\end{proof}

\begin{corollary}
    For each cardinal number $\kappa$, there are uniquely determined classical cardinals $\kappa_\vt$, $\kappa_\vb$, and $\kappa_\vn$ such that $$\kappa=\kappa_\vt+\kappa_\vb\cdot\bb+\kappa_\vn\cdot\nn.$$
\end{corollary}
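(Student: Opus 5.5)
Existence comes straight from the preceding theorem. Given a cardinal number $\kappa$, choose a set $A$ with $\kappa=|A|$ and put $\kappa_\vt:=|A_\vt|$, $\kappa_\vb:=|A_\vb|$ and $\kappa_\vn:=|A_\vn|$. These are cardinals of classical sets, so they are classical cardinals, and the theorem gives $\kappa=\kappa_\vt+\kappa_\vb\cdot\bb+\kappa_\vn\cdot\nn$. The only check needed is that the coefficients do not depend on the choice of $A$. If $|A|=|A'|$, then Theorem~\ref{shortcard}(3) gives $|A_\vt|=|A'_\vt|$, $|A_\vb|=|A'_\vb|$ and $|A_\vn|=|A'_\vn|$.

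Uniqueness takes more work. Suppose $\kappa=\lambda_\vt+\lambda_\vb\cdot\bb+\lambda_\vn\cdot\nn$ for classical cardinals $\lambda_\vt=|X|$, $\lambda_\vb=|Y|$, $\lambda_\vn=|Z|$, where $X,Y,Z$ are classical sets. The plan is to compute the decomposition of the witnessing set $S:=X\uplus(Y\times\langle\{\emptyset\}\mid\emptyset\rangle)\uplus(Z\times\langle\emptyset\mid\{\emptyset\}\rangle)$, as in the proof of the theorem. Membership in products and disjoint unions evaluates as $(a,b)\in A\times B\Leftrightarrow a\in A\land b\in B$ together with the disjoint-union clauses. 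Since $X,Y,Z$ are classical, we get
$$S=\big\langle \{(y,1):y\in Y\}\mid\{(x,0):x\in X\}\mid\{(z,2):z\in Z\}\big\rangle,$$
with the pairs suitably nested.

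It follows that $S_\vt\cong X$, $S_\vb\cong Y$ and $S_\vn\cong Z$ classically. Hence $|S_\vt|=\lambda_\vt$, $|S_\vb|=\lambda_\vb$ and $|S_\vn|=\lambda_\vn$, using Theorem~\ref{EqCardThrm}. From $|S|=\kappa=|A|$, Theorem~\ref{shortcard}(3) then gives $\lambda_\vt=|A_\vt|=\kappa_\vt$, and likewise for the $\vb$ and $\vn$ components.

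The main obstacle is the choice of sense for ``uniquely determined''. Equality of cardinals is here given by $\Leftrightarrow$, and the argument above only shows that the truth of $|A|=|B|$ forces the truth of the component equalities. For classical cardinals this causes no difficulty. Equality between classical sets is classical, so ${!}$-equality already determines the coefficients, and the uniqueness claim reduces to ordinary uniqueness of $|\cdot|_{Cl}$. The one place needing care is the explicit computation of $S$, in particular checking that the $\vb$ and $\vn$ coordinates land in the right parts. This uses the fact that products with a classical left factor preserve the truth value of the right factor.
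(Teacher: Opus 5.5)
Your proof is correct and takes essentially the paper's route. The paper leaves the corollary unproved, but its intended argument is the same: existence from the preceding theorem, and uniqueness by computing the three parts of $X\uplus(Y\times\langle\{\emptyset\}\mid\emptyset\rangle)\uplus(Z\times\langle\emptyset\mid\{\emptyset\}\rangle)$ exactly as in that theorem's proof and then applying Theorem~\ref{shortcard}(3). Your bracket $\langle\{(y,1):y\in Y\}\mid\{(x,0):x\in X\}\mid\{(z,2):z\in Z\}\rangle$ also puts each tagged part in the correct slot of Definition~\ref{def:XYZ}, whereas the paper's display there lists the $A_\vt$-tagged part in the first (inconsistent) slot.
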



\smallskip

The following theorem tells us how addition and multiplication of non-classical cardinals can be computed using the classical cardinals.

\begin{theorem}\label{addandmultcard}
For all cardinal numbers $\kappa$ and $\mu$, 
$$\kappa+\mu=(\kappa_{\vt}+\mu_{\vt})+(\kappa_{\vb}+\mu_{\vb})\cdot\mathfrak{b}+(\kappa_{\vn}+\mu_{\vn})\cdot\mathfrak{n}$$
       and
       $$\kappa\cdot \mu=\kappa_{\vt} \mu_{\vt}+(\kappa_{\vt}\mu_{\vb}+\kappa_{\vb}\mu_{\vt}+\kappa_{\vb}\mu_{\vb})\cdot\mathfrak{b}+(\kappa_{\vt}\mu_{\vn}+\kappa_{\vn}\mu_{\vt}+\kappa_{\vn}\mu_{\vn})\cdot\mathfrak{n}.$$
Moreover, 
$$\kappa \cdot 1 = \kappa,\:\kappa \cdot 0 = 0,\:\bb^2 = \bb, \:\nn^2 = \nn, \:\text{and }\:\bb\cdot\nn = 0.$$
\end{theorem}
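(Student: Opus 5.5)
The plan is to reduce everything to computing the three components of a disjoint union and of a product, and then to invoke the decomposition theorem $|A|=|A_\vt|+|A_\vb|\cdot\bb+|A_\vn|\cdot\nn$ together with its uniqueness corollary. Fix sets $A$ and $B$ with $\kappa=|A|$ and $\mu=|B|$. By the corollary, $\kappa_\vt=|A_\vt|$, $\kappa_\vb=|A_\vb|$, $\kappa_\vn=|A_\vn|$, and likewise for $\mu$ with $B$. By Theorem~\ref{shortcard}(3), $|C|$ is determined up to $=$ by the classical cardinals of $C_\vt,C_\vb,C_\vn$. It therefore suffices to identify these three parts for $C=A\uplus B$ and for $C=A\times B$.

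For addition, I use the membership conditions $(a,0)\in A\uplus B\Leftrightarrow a\in A$ and $(b,1)\in A\uplus B\Leftrightarrow b\in B$. Strong equivalence preserves the full truth value, so
\[(A\uplus B)_\vt=A_\vt\times\{0\}\cup B_\vt\times\{1\},\]
and similarly for the $\vb$ and $\vn$ parts. This gives $|(A\uplus B)_\vt|=|A_\vt|+|B_\vt|$ classically, and the same for $\vb$ and $\vn$. Applying the decomposition theorem to $A\uplus B$ yields the first formula. Since the classical-cardinal coefficients add classically, I will silently identify $|X|+|Y|$ for classical $X,Y$ with classical cardinal addition.

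For multiplication, I use $(a,b)\in A\times B\Leftrightarrow a\in A\land b\in B$ and read off the $\land$ truth table:
\begin{itemize}
\item $\vt$ arises only from $\vt\land\vt$;
\item $\vb$ arises from $\vt\land\vb$, $\vb\land\vt$ and $\vb\land\vb$;
\item $\vn$ arises from $\vt\land\vn$, $\vn\land\vt$ and $\vn\land\vn$;
\item $\vb\land\vn$ and $\vn\land\vb$ give $\vf$, so those pairs fall outside the realm.
\end{itemize}
Hence
\[(A\times B)_\vt=A_\vt\times B_\vt,\qquad (A\times B)_\vb=(A_\vt\times B_\vb)\cup(A_\vb\times B_\vt)\cup(A_\vb\times B_\vb),\]
with the analogous formula for the $\vn$ part, and these are disjoint unions of classical sets. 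Their classical cardinalities are the stated sums of products, and the decomposition theorem again gives the formula. This is the step needing the most care. I have to check that the realm of $A\times B$ really consists of pairs with both coordinates in the respective realms. I also have to confirm that the $\vb$/$\vn$ cross terms vanish exactly because $\vb\land\vn=\vf$, which is where $\bb\cdot\nn=0$ comes from.

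The remaining identities are special cases of the product formula.
\begin{itemize}
\item For $\kappa\cdot1$, take $\mu=1$, so $\mu_\vt=1$ and $\mu_\vb=\mu_\vn=0$.
\item For $\kappa\cdot0$, all coordinates of $\mu$ are $0$.
\item For $\bb^2$, take $\kappa=\mu=\bb$, so only $\kappa_\vb\mu_\vb=1$ survives, giving $\bb$; $\nn^2=\nn$ is symmetric.
\item For $\bb\cdot\nn$, every term contains a zero coefficient, so the product is $0$.
\end{itemize}
Each of these is an equality between triples of classical numbers, so the equalities hold in the strong sense by Theorem~\ref{EqCardThrm}.
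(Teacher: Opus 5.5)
Your proof is correct, but it runs in the opposite direction from the paper's.

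The paper proves $\bb\cdot\nn=0$ directly from $(A\times B)^!=A^!\times B^!$ and $(A\times B)^?=A^?\times B^?$, which give $\langle\{\emptyset\}\mid\emptyset\rangle\times\langle\emptyset\mid\{\emptyset\}\rangle=\langle\emptyset\mid\emptyset\rangle=\emptyset$. It leaves $\bb^2=\bb$ and $\nn^2=\nn$ to the reader. It then gets the general product formula by expanding $(\kappa_\vt+\kappa_\vb\bb+\kappa_\vn\nn)(\mu_\vt+\mu_\vb\bb+\mu_\vn\nn)$ with the associativity/commutativity/distributivity proposition and substituting those three identities.

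You instead compute the $\vt$, $\vb$, $\vn$ parts of $A\uplus B$ and $A\times B$ for arbitrary $A$ and $B$ directly from the truth tables. You read off the coefficients via the decomposition theorem and Theorem~\ref{shortcard}(3), and obtain $\kappa\cdot 1=\kappa$, $\kappa\cdot 0=0$, $\bb^2=\bb$, $\nn^2=\nn$, $\bb\cdot\nn=0$ as special cases. The underlying fact is the same: your reading of the $\land$-table, including the realm check you flag, is exactly the pair of identities for $(A\times B)^!$ and $(A\times B)^?$. You just apply it to general sets rather than only to the generators.

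Your route is self-contained. It does not depend on the ring-law proposition, which the paper states without proof, and it shows that the missing $\bb\nn$ cross terms are literally the table entry $\vb\land\vn=\vf$. The paper's route is shorter once the ring laws are granted. It also presents the cardinals as an algebra over the classical cardinals, determined by the products of its basis elements $1,\bb,\nn$.

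One caution about your final sentence. These identities are asserted to be true, not classical. For instance, $\bb^2=\bb$ is also false, because $\bb\neq\bb$ by Theorem~\ref{comparecardnum}(4), since $\bb_\vb=1\not\leq 0=\bb_\vn$. So ``in the strong sense'' should mean no more than that the equality is true, which is all the theorem claims.
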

\smallskip
\begin{proof}
    We only show that $\bb\cdot\nn=0$ and that $\kappa\cdot \mu=\kappa_{\vt} \mu_{\vt}+(\kappa_{\vt}\mu_{\vb}+\kappa_{\vb}\mu_{\vt}+\kappa_{\vb}\mu_{\vb})\cdot\mathfrak{b}+(\kappa_{\vt}\mu_{\vn}+\kappa_{\vn}\mu_{\vt}+\kappa_{\vn}\mu_{\vn})\cdot\mathfrak{n}.$ 

    \smallskip

    $\bb\cdot\nn=0$: First, notice that for any $A$ and $B$, $(A\times B)^!=A^!\times B^!$ and $(A\times B)^?=A^?\times B^?$. Moreover, $$\langle \{\emptyset\}\mid \emptyset\mid\emptyset\rangle=\langle \{\emptyset\}\mid\emptyset\rangle\quad\text{ and }\quad \langle \emptyset\mid \emptyset\mid\{\emptyset\}\rangle=\langle \emptyset\mid\{\emptyset\}\rangle.$$
    Thus,
    \begin{align*}
        \bb\cdot \nn&=|\langle \{\emptyset\}\mid \emptyset\mid\emptyset\rangle\times\langle \emptyset\mid \emptyset\mid\{\emptyset\}\rangle|\\
        &=|\langle \{\emptyset\}\mid \emptyset\rangle\times\langle \emptyset\mid \{\emptyset\}\rangle|\\
        &=|\langle \emptyset\mid \emptyset\rangle|\\
        &=|\emptyset|\\
        &=0.
    \end{align*}

   For the second point, we use  associativity, commutativity, and distributivity to get that $\kappa\cdot \mu$ is equal to 
\begin{align*}
    \kappa_{\vt} \mu_{\vt}+&(\kappa_{\vt}\mu_{\vb}+\kappa_{\vb}\mu_{\vt})\cdot\mathfrak{b}+ (\kappa_\vb \mu_\vb)\cdot \bb^2\\
    &+(\kappa_{\vt}\mu_{\vn}+\kappa_{\vn}\mu_{\vt})\cdot\mathfrak{n}+(\kappa_\vn \mu_\vn)\cdot \nn^2\\
    &\quad +(\kappa_\vb \mu_\vn+\kappa_\vn \mu_\vb) \cdot \bb\cdot \nn
\end{align*}
Using the identities for $\bb$ and $\nn$, we now get
$$\kappa_{\vt} \mu_{\vt}+(\kappa_{\vt}\mu_{\vb}+\kappa_{\vb}\mu_{\vt}+\kappa_{\vb}\mu_{\vb})\cdot\mathfrak{b}+(\kappa_{\vt}\mu_{\vn}+\kappa_{\vn}\mu_{\vt}+\kappa_{\vn}\mu_{\vn})\cdot\mathfrak{n}.$$
\end{proof}

\begin{theorem}\label{comparecardnum} For all cardinals $\kappa$ and $\mu$,
\begin{enumerate}
\item $\kappa \leq \mu$ \quad iff \quad the following conditions are satisfied:
\begin{align*}
\kappa_\vt + \kappa_\vb + \kappa_\vn &\leq \mu_\vt + \mu_\vb + \mu_\vn, \\
\kappa_\vt + \kappa_\vb &\leq \mu_\vt + \mu_\vb, \\
\kappa_\vt + \kappa_\vn &\leq \mu_\vt + \mu_\vn,\text{ and} \\
\kappa_\vt &\leq \mu_\vt\text{;}
\end{align*}
\item $\kappa \not\leq \mu$ \quad iff \quad $\kappa_\vt + \kappa_\vb \not\leq \mu_\vt + \mu_\vn$;
\vspace{0.5em}
\item $\kappa = \mu$ \quad iff \quad $\kappa_\vt = \mu_\vt$, $\kappa_\vb = \mu_\vb$, and $\kappa_\vn = \mu_\vn$;
\item $\kappa \neq \mu$ \quad iff \quad at least one of the following holds:
\begin{align*}
\kappa_\vt + \kappa_\vb &\not\leq \mu_\vt + \mu_\vn, \\
\mu_\vt + \mu_\vb &\not\leq \kappa_\vt + \kappa_\vn, \\
\kappa_\vb &\not\leq \mu_\vn,\text{ or} \\
\mu_\vb &\not\leq \kappa_\vn.
\end{align*}
\end{enumerate}
\end{theorem}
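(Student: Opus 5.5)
The plan is to reduce everything to Theorem~\ref{shortcard} by choosing a concrete representative for each cardinal. Given $\kappa$ and $\mu$, write $\kappa=|A|$ and $\mu=|B|$. By the preceding theorem and its corollary, $\kappa_\vt=|A_\vt|$, $\kappa_\vb=|A_\vb|$, $\kappa_\vn=|A_\vn|$, and likewise for $\mu$ and $B$. Here the classical sizes are read through $|\cdot|_{Cl}$; by the remark in Section~\ref{EquinumerositySection}, $\preceq$ and $\cong$ agree with their classical counterparts on classical sets. Since $A_\vb$, $A_\vt$ and $A_\vn$ are pairwise disjoint classical sets, classical cardinal arithmetic gives
\[
|A^!|=\kappa_\vt+\kappa_\vb,\quad |A^?|=\kappa_\vt+\kappa_\vn,\quad |\rlm(A)|=\kappa_\vt+\kappa_\vb+\kappa_\vn,
\]
and the analogous identities hold for $B$. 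All the quantities involved are classical cardinals of classical sets, so the formulas are classical, and truth and falsity can be handled separately via the ``iff'' clauses.

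With these identities, item 1 is a direct translation of item 1 of Theorem~\ref{shortcard}: the four conditions on $\rlm$, $!$-extension, $?$-extension and $\vt$-part become exactly the four displayed inequalities. Item 2 comes from item 2 of Theorem~\ref{shortcard}, since $|A^!|\not\leq|B^?|$ reads $\kappa_\vt+\kappa_\vb\not\leq\mu_\vt+\mu_\vn$. Item 3 follows from item 3 of Theorem~\ref{shortcard}, or equally from the uniqueness in the corollary. Item 4 is item 4 of Theorem~\ref{shortcard}, translated term by term: $|A^!|\nleq|B^?|$, then $|B^!|\nleq|A^?|$, then $|A_\vb|\nleq|B_\vn|$, then $|B_\vb|\nleq|A_\vn|$.

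The main point needing care is the translation of the classical-set sizes into sums. I must check that the cardinal of a classical set in the sense of Definition~\ref{DefCardNum} corresponds to its classical cardinal. For instance, the cardinal addition $|A_\vt|+|A_\vb|$ of the paper is $|A_\vt\uplus A_\vb|$. This set is classical and, by an injection, equinumerous with $A^!$, so it equals $|A^!|$ by Theorem~\ref{EqCardThrm}. I would also check that the orderings $\leq$ and $\not\leq$ among classical cardinals coincide with the classical ones, which follows from the remark on classical sets.

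Since the corollary states that the coefficients are uniquely determined, any choice of representatives $A$ and $B$ gives the same $\kappa_\vt$, $\kappa_\vb$, $\kappa_\vn$, $\mu_\vt$, $\mu_\vb$, $\mu_\vn$. The statement is therefore independent of representatives, which completes the plan.
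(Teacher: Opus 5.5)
Your proposal is correct and takes essentially the same route as the paper, whose proof is simply ``this follows from Theorem~\ref{shortcard}''. You usefully make explicit what the paper leaves implicit: the identifications $|A^!|=\kappa_\vt+\kappa_\vb$, $|A^?|=\kappa_\vt+\kappa_\vn$, and $|\rlm(A)|=\kappa_\vt+\kappa_\vb+\kappa_\vn$ (via $A_\vt\uplus A_\vb\cong A^!$ and Theorem~\ref{EqCardThrm}), and the observation that, because these are true identities, substituting them into both the truth and falsity clauses of Theorem~\ref{shortcard} is legitimate.
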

\begin{proof}
    This follows from Theorem \ref{shortcard}.
    
\end{proof}

It should be pointed out that $\leq$ fails to be a partial order on the cardinal numbers, as we have both
$$\aleph_0\leq\aleph_0+\bb\quad\text{ and }\quad\aleph_0+\bb\leq \aleph_0.$$
But $$\neg(\aleph_0=\aleph_0+\bb).$$ 
This simply tells us that there is a set with $\aleph_0$ elements that is a subset of a set $\aleph_0+\bb$, and there is a set with $\aleph_0+\bb$ that is a subset of a set with $\aleph_0$. However, there is no set that has both $\aleph_0+\bb$ and $\aleph_0$ elements.

\begin{definition}\label{integers}   
We call a cardinal number $\kappa$ \emph{finite} if the classical cardinals $\kappa_\vt$, $\kappa_\vb$, and $\kappa_\vn$ are all finite. We will refer to finite cardinal numbers simply as \emph{finite numbers}.
\end{definition}

\begin{proposition}
Let $n$ and $m$ be finite numbers. If $n\leq m$ and $m\leq n $, then $n=m.$
\end{proposition}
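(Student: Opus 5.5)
The plan is to reduce the statement to the four linear inequalities of Theorem~\ref{comparecardnum}(1) and then apply the same arithmetic as in Proposition~\ref{FinShrBern}. Write $n=n_\vt+n_\vb\cdot\bb+n_\vn\cdot\nn$ and $m=m_\vt+m_\vb\cdot\bb+m_\vn\cdot\nn$ using the Corollary. By Definition~\ref{integers}, all six coefficients are classical natural numbers. Since they are finite, the ordinary classical cancellation laws for addition apply to them.

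From $n\leq m$, Theorem~\ref{comparecardnum}(1) gives four inequalities:
\begin{align*}
n_\vt+n_\vb+n_\vn &\leq m_\vt+m_\vb+m_\vn,\\
n_\vt+n_\vb &\leq m_\vt+m_\vb,\\
n_\vt+n_\vn &\leq m_\vt+m_\vn,\\
n_\vt &\leq m_\vt.
\end{align*}
From $m\leq n$, the same theorem gives the reverse inequalities. Since classical $\leq$ on natural numbers is antisymmetric, we obtain four equalities. The last one is $n_\vt=m_\vt$. Substituting it into the second and third equalities and cancelling, which is legitimate because the numbers are finite, gives $n_\vb=m_\vb$ and $n_\vn=m_\vn$. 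Theorem~\ref{comparecardnum}(3) then gives $n=m$.

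An alternative route is to pick sets $A$ and $B$ with $|A|=n$ and $|B|=m$. These sets are finite in the sense of the earlier definition, because their realms have cardinality $n_\vt+n_\vb+n_\vn$ and $m_\vt+m_\vb+m_\vn$ respectively. One could then invoke Proposition~\ref{FinShrBern} directly, translating through the definition of $\leq$ and Theorem~\ref{EqCardThrm}. I would mention this as a one-line remark.

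There is no real obstacle here. The only point needing care is that the conclusion is the truth of $n=m$, so we only need the positive direction of Theorem~\ref{comparecardnum}(3), not its falsity clause. We also need to note that antisymmetry and cancellation hold for these classical coefficients, since they are finite classical cardinals in $\mathbb{HCL}$. This finiteness is exactly what fails in the counterexample $\aleph_0$ versus $\aleph_0+\bb$ given just before Definition~\ref{integers}.
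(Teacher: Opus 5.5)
Your proof is correct and is essentially the paper's. The paper simply says the result follows from Proposition~\ref{FinShrBern}, which is your one-line alternative route. Your main argument reruns that proposition's proof (four equalities, then cancellation) directly on the coefficients via Theorem~\ref{comparecardnum}(1) and~(3), so it is the same argument stated for cardinals rather than sets.
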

\begin{proof}
    This follows from Proposition~\ref{FinShrBern}.
    
\end{proof}

To increase the size of a finite number, we can do one of two things: We can add elements to its classical, inconsistent, or incomplete parts, or we can move elements from one of its non-classical parts to its classical part.

We can therefore imagine them on a three-dimensional grid. One direction for the classical part, one direction for the inconsistent part, and one direction for the incomplete part. One number is larger than another if it can be reached using only two kinds of steps. The first kind simply moves positively along any coordinate axis. The second kind moves diagonally, decreasing either the inconsistent or incomplete coordinate, while increasing the classical coordinate. (See Figure \ref{steps}.)

\begin{figure}[H]
\centering


\begin{tikzcd}
                                      & \quad 1+\nn\quad \arrow[rrr]                                         &  &                                             & 1+\bb+\nn                                                 \\
\quad 1 \: \arrow[ru] \arrow[rrr]              &                                                           &  & 1+\bb \arrow[ru]                            &                                                           \\
                                      &                                                           &  &                                             &                                                           \\
                                      &                                                           &  &                                             &                                                           \\
                                      & \:\nn \: \arrow[rrr, dashed] \arrow[uuuu, dashed] \arrow[luuu] &  &                                             & \bb+\nn \arrow[uuuu] \arrow[luuu] \arrow[llluuuu, dashed] \\
0 \arrow[uuuu] \arrow[rrr] \arrow[ru] &                                                           &  & \bb \arrow[uuuu] \arrow[ru] \arrow[llluuuu] &                                                          
\end{tikzcd}
\caption{A diagram showing the first few cardinal numbers. Some lines are dashed for visual clarity.}
\label{steps}
\end{figure}
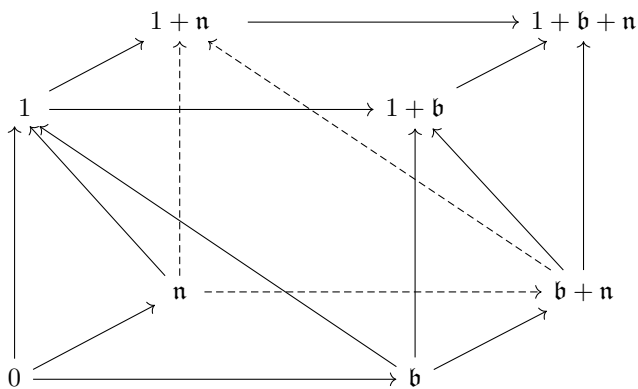

\section{Conclusion}
In this paper, we have presented a thorough treatment of cardinality in the paraconsistent and paracomplete set theory $\BZFC$. We have given an account of what it means for two non-classical sets to have the same size, and we have constructed the cardinal numbers that tell us how many elements a set has. 

In the introduction, we asked: how many elements does the inconsistent set $A = \langle\{a\}|\emptyset|\emptyset\rangle$ have? We now have a clear answer. Just as a singleton has \emph{one} element, the set $A$ has exactly $\bb$ elements.  Similarly, the incomplete set $\langle\emptyset|\emptyset|\{a\}\rangle$ has $\nn$ elements. These two cardinals turned out to be all we needed to express every non-classical cardinal, as we showed that any cardinal number can be expressed as a linear combination of 1, $\bb$, and $\nn$ over the classical cardinals.

\smallskip

\smallskip

We conclude by posing an open question: \emph{Is there a natural generalization to the real numbers?} It seems clear that we can simply write $x_\vt+x_\vb\cdot \bb+x_\vn\cdot\nn$ for any classical real numbers $x_\vt$, $x_\vb$, $x_\vn$. We can add and multiply these numbers using the rules from Theorem \ref{addandmultcard}. We can define subtraction as $x+(-y)$, where 
$$-y:=-y_\vt+(-y_\vb)\cdot\bb+(-y_\vn)\cdot \nn.$$
Similarly, if $y_\vt\neq 0$, $y_\vt+y_\vb\neq 0$, and $y_\vt+y_\vn\neq 0$, we can define division as $x\cdot y^{-1}$, where
$$y^{-1}:=\frac{1}{y_\vt}+\frac{-y_\vb}{y_\vt(y_\vt+y_\vb)}\cdot \bb+\frac{-y_\vn}{y_\vt(y_\vt+y_\vn)}\cdot \nn.$$
In this way, $x-x=0$ and $x\cdot x^{-1}=1$.

What is not clear is how to interpret $x\neq y$, $x\leq y$, and $x\nleq y$. Is there a reason to believe that the equivalences of Theorem \ref{comparecardnum} should hold here? We would have to believe $-2\nn\neq -\bb$, but this seems counterintuitive, as $\neg(2\nn\neq\bb)$. What is needed is a convincing account of how these numbers are to be understood.

\bibliographystyle{alpha}
\bibliography{references}

\end{document}